\newcommand{\bi}{\begin{itemize}}
\newcommand{\ei}{\end{itemize}}
\newcommand{\bn}{\begin{enumerate}}
\newcommand{\en}{\end{enumerate}}
\newcommand{\bq}{\begin{equation}}
\newcommand{\eq}{\end{equation}}
\newcommand{\A}{{\mathbb{A}}}
\newcommand{\C}{{\mathbb{C}}}
\newcommand{\DD}{\Delta}
\newcommand{\ED}{E_{\Dh}}
\newcommand{\HD}{H_{\Dh}}
\newcommand{\BPD}{BP_{\Dh}}
\newcommand{\BPnD}{BP\langle n \rangle_{\Dh}}
\newcommand{\Q}{{\mathbb{Q}}}
\newcommand{\R}{{\mathbb{R}}}
\newcommand{\Z}{{\mathbb{Z}}}
\newcommand{\cl}{\mathrm{cl}}
\newcommand{\HdgE}{\mathrm{Hdg}_{E}}
\newcommand{\Sing}{\mathrm{Sing}_*}
\newcommand{\Hom}{\mathrm{Hom}}
\newcommand{\Manc}{\mathbf{Man}_{\C}}
\newcommand{\sPre}{\mathbf{sPre}}
\newcommand{\Tb}{\mathbf{T}}
\newcommand{\sPreTb}{\sPre(\Tb)}
\newcommand{\hosPre}{\mathrm{ho}\sPre}
\newcommand{\sS}{\mathbf{sS}}
\newcommand{\Ch}{{\mathcal C}}
\newcommand{\Dh}{{\mathcal D}}
\newcommand{\Fh}{{\mathcal F}}
\newcommand{\Gh}{{\mathcal G}}
\newcommand{\Vh}{{\mathcal V}}
\newcommand{\into}{\hookrightarrow}
\newtheorem{theorem}{Theorem}[section]
\newtheorem{lemma}[theorem]{Lemma}
\newtheorem{prop}[theorem]{Proposition}
\newtheorem{cor}[theorem]{Corollary}
\theoremstyle{definition}
\newtheorem{defn}[theorem]{Definition}
\newtheorem{example}[theorem]{Example}
\newtheorem{remark}[theorem]{Remark}
\begin{document}

\author{Gereon Quick}

\address{Department of Mathematical Sciences, NTNU, NO-7491 Trondheim, Norway}
\email{gereon.quick@math.ntnu.no}

\title{Unstable splittings in Hodge filtered Brown-Peterson cohomology}


\date{}

\begin{abstract}
We construct Hodge filtered function spaces associated to infinite loop spaces. For Brown-Peterson cohomology, we show that the corresponding Hodge filtered spaces satisfy an analog of Wilson's unstable splitting. As a consequence, we obtain an analog of Quillen's theorem for Hodge filtered Brown-Peterson cohomology for complex manifolds.   
\end{abstract}

\maketitle

\section{Introduction}

For a fixed prime number $p$, let $BP^*(-)$ denote Brown-Peterson cohomology and let $BP$ be the $\Omega$-spectrum representing $BP^*(-)$ (see \cite{bp} and \cite{quillenfgl}). 
For an integer $n \ge 0$, let $BP\langle n \rangle$ be the $\Omega$-spectrum representing the associated intermediate cohomology theory defined in \cite{wilson}. These spectra are connected via canonical maps $g_n \colon BP \to BP\langle n \rangle$ and $f_n \colon BP\langle n \rangle \to BP\langle n-1 \rangle$. 
The coefficient rings of $BP$ and $BP\langle n \rangle$ are isomorphic to polynomial algebras 
\[
BP^* = \Z_{(p)}[v_1, v_2, \ldots]~ \text{and}~ BP\langle n \rangle^* = \Z_{(p)}[v_1, \ldots, v_n]
\]
where the generator $v_i$ has degree $-2(p^i-1)$. The effect of $g_n$ on coefficients is to send the generators $v_i$ with $i \ge n+1$ to $0$, and $f_n$ sends $v_n$ to $0$.


Let $BP_k$ and $BP\langle n \rangle_k$ denote the $k$th spaces of the spectra $BP$ and $BP\langle n \rangle$, respectively. In \cite{wilson}, Wilson proved that, for $k\le 2(p^n+p^{n-1}+\cdots+p+1)$, there is a homotopy equivalence
\bq\label{splittingintro}
BP_k \xrightarrow{\approx} BP\langle n \rangle_k \times \prod_{j>n}BP\langle j \rangle_{k+2(p^j-1)},
\eq
and the product cannot be broken down further. 
This result has many important consequences. For example, it yields a proof of Quillen's theorem that, for a finite complex $X$, $BP^*(X)$ is generated as a $BP^*$-module by elements of non-negative degree (\cite[Theorem 5.7]{wilson}).

In \cite{hfc}, Hopkins and the author showed that, given any rationally even spectrum $E$, there is an associated Hodge filtered cohomology theory $\ED^*(*)(-)$ for complex manifolds represented by a presheaf of spectra $\ED$. These Hodge filtered cohomology theories are natural generalizations of analytic Deligne cohomology for complex manifolds. In particular, there is a Hodge filtered $BP$-theory represented by a presheaf of spectra $\BPD$. 
The purpose of this paper is to show that Wilson's splitting \eqref{splittingintro} of the spaces $BP_k$ induces a splitting of the spaces, i.e. simplicial presheaves, in the spectrum $\BPD$.

It is important to note that the splitting \eqref{splittingintro} does not exist on the level of spectra (see \cite[p. 817]{bjw}). Hence the first important step is to construct Hodge filtered spaces associated to $BP$. 
The presheaf of spectra $\BPD$ is actually given by a wedge sum of presheaves of spectra $\BPD(m)$, one summand for each integer $m$.  
For all integers $m$ and $k\ge 0$, we construct simplicial presheaves $BP_k(m)$ on the Grothendieck site of complex manifolds together with weak equivalences $\sigma_k \colon BP_k(m) \to \Omega BP_{k+1}(m)$ such that the presheaf of spectra $\{BP_k(m)\}_k$ built by the $BP_k(m)$ and $\sigma_k$ is equivalent to $\BPD(m)$. 
In fact, we will construct spaces $E_k(m)$ for any connective rationally even spectrum $E$. We call $E_k(m)$ the $m$th Hodge filtered function space associated to $E_k$. For the reader who is familiar with differential cohomology theories, we remark that these constructions are similar to how Hopkins-Singer define differential function spaces for smooth manifolds in \cite{hs}. 

The second important step is to show that a map $E_k \to F_n$ between spaces of rationally even spectra $E$ and $F$ induces a map $E_k(m)\to F_n(m)$ of simplicial presheaves. 
The analog of Wilson's splitting will then be a natural consequence of the constructions. 

Our main results are the following.

\begin{theorem}\label{mainthmintro}
Let $m$ and $n$ be integers with $n\ge 0$. For every $k \leq 2(p^n + \cdots + 1)$, there is a weak equivalence of simplicial presheaves 
\bq\label{Hodge5.4intro}
BP_k(m) \xrightarrow{\approx} BP\langle n \rangle _k(m) \times \prod_{j>n} BP\langle j \rangle_{k+2(p^j-1)}(m).
\eq
\end{theorem}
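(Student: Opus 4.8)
The plan is to deduce the equivalence \eqref{Hodge5.4intro} by transporting Wilson's equivalence \eqref{splittingintro} through the Hodge filtered function space construction $(-)(m)$, using the two structural facts established earlier: that a map of spaces induced by a map of rationally even spectra induces a map on Hodge filtered function spaces, and that $(-)(m)$ is built from homotopy limits. First I would record that Wilson's map \eqref{splittingintro} is not merely an abstract homotopy equivalence but is assembled componentwise from maps induced by maps of spectra. The projection onto $BP\langle n \rangle_k$ is the $k$th space of $g_n \colon BP \to BP\langle n \rangle$, while the projection onto $BP\langle j \rangle_{k+2(p^j-1)}$ for $j > n$ is induced by a map of spectra built from the canonical maps $g_j$ and the internal $v_j$-structure. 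Applying the second key step to each component produces maps $BP_k(m) \to BP\langle j \rangle_{k+2(p^j-1)}(m)$, and since $(-)(m)$ is defined via homotopy limits it carries the product on the right-hand side of \eqref{splittingintro} to the product of the corresponding Hodge filtered function spaces. Assembling the components yields a well-defined map \eqref{Hodge5.4intro}.

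The main work is then to prove that this map is a weak equivalence. I would use that $E_k(m)$ is, by construction, a homotopy pullback of a diagram comparing the underlying space $E_k$ with the $m$th piece $F^m$ of the filtered complex-analytic de Rham data of $E$ over their common rationalization, in the spirit of the Hopkins--Singer differential function spaces. Wilson's map induces a map between the homotopy pullback diagrams defining the two sides of \eqref{Hodge5.4intro}, so it suffices to check that this map is a weak equivalence on each of the three input corners of the pullback; a map of homotopy pullback squares that is a weak equivalence on these three corners induces a weak equivalence on the pullbacks. On the corner given by the underlying spaces, the required equivalence is exactly Wilson's equivalence \eqref{splittingintro}. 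On the de Rham and Hodge filtered corners, the relevant terms are determined by the rational cohomology of the spaces together with their Hodge filtration, and since Wilson's map is an isomorphism on rational cohomology and is induced by maps of spectra, the induced maps on these corners are weak equivalences as well.

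The step I expect to be the main obstacle is the verification on the de Rham and Hodge filtered corners, that is, confirming that Wilson's splitting --- a priori only a homotopy equivalence of spaces --- is compatible with the filtered de Rham data defining $E_k(m)$, so that in particular the de Rham data of $BP$ is matched with the product of the de Rham data of the $BP\langle j \rangle$. The key point is that $BP$, $BP\langle n \rangle$, and the $BP\langle j \rangle$ are all rationally even, so after rationalization each becomes a product of Eilenberg--MacLane spectra carrying the standard Hodge filtration on its $\C$-coefficients, and the components of Wilson's splitting are induced by the spectrum maps $g_j$. Hence the rational splitting is a splitting of products of rational Eilenberg--MacLane spaces that automatically respects the Hodge filtrations, which is precisely what is needed to transport the equivalence through the homotopy pullbacks. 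Once this compatibility is in place, the equivalence \eqref{Hodge5.4intro} follows directly from \eqref{splittingintro}, with the range $k \le 2(p^n + \cdots + 1)$ inherited precisely from the range in which Wilson's equivalence \eqref{splittingintro} holds.
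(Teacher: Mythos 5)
Your overall architecture---transport Wilson's map through the Hodge filtered function space construction and then check that it is a weak equivalence corner by corner on the defining homotopy pullback diagrams---is the same as the paper's (the corner check is exactly the proof of Proposition \ref{inducedmap}\,b), and the product compatibility is Lemma \ref{productlemma} and Proposition \ref{productmapprop}). However, the construction of the map in your proposal rests on a false premise: you assert that the components of Wilson's splitting \eqref{splittingintro} are induced by maps of spectra. The paper records explicitly, citing \cite[p.~817]{bjw}, that the splitting does not exist on the level of spectra, and this is not a pedantic point: since all the spectra involved are connective $\Omega$-spectra, if every component $BP_k \to BP\langle j \rangle_{k+2(p^j-1)}$ were the $k$th space of a spectrum map, then a space-level equivalence at any single level $k\ge 0$ would already be an isomorphism on all stable homotopy groups and would force a stable splitting---a contradiction. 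Only the projection to $BP\langle n \rangle_k$ comes from $g_n$; the degree-shifting components for $j>n$ are genuinely unstable maps produced by Wilson's obstruction-theoretic splitting of the $v_j$-fiber sequences in a range of dimensions. Consequently your ``second structural fact'' (functoriality of $(-)(m)$ only for maps of spaces induced by maps of spectra) is too weak to even produce the map \eqref{Hodge5.4intro}, and your claim that the compatibility with the Hodge filtered de Rham corners is ``automatic'' because the components are stable has no content for precisely the components where the issue arises.

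What is actually needed, and what the paper proves in Proposition \ref{inducedmap}, is functoriality for an \emph{arbitrary} continuous map $f \colon E_k \to F_n$ between spaces of connective rationally even $\Omega$-spectra, including maps that shift degree and do not stabilize. The substance of that proof is the compatible choice of $m$-twisted fundamental cocycles: one checks via the Hurewicz isomorphism that $f_{K*}(c_{E_k})$ and $f^*(c_{F_n})$ agree in $H^n(E_k; \pi_*F_{\C})$, because both correspond to the composite graded homomorphism $\pi_{*+k}E_k \to \pi_{*+(k-n)}F_{\C}$ and the $(2\pi i)^{j+m}$-twists are consistent under the degree shift; one then replaces $\iota_{F_n}$ by $\iota_{F_n}-\delta b_{F_n}$ so that the cocycle-level square commutes strictly, which is what yields an honest map of pullback diagrams rather than one commuting only up to an unspecified homotopy. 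Your proposal never addresses either of these points. Once this unstable functoriality is in place, the rest of your argument does go through and recovers the paper's proof: Wilson's equivalence \cite[Corollary 5.4]{wilson} handles the space corner, the induced isomorphism on $\pi_*(-)\otimes\C$ handles the de Rham and Hodge filtered corners, and the product lemma together with two-out-of-three gives \eqref{Hodge5.4intro} in the stated range $k \le 2(p^n+\cdots+1)$.
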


It follows, in particular, that, for every complex manifold $M$, the natural map 
\[
\BPD^k(m)(M) \to BP\langle n \rangle_{\Dh}^k(m)(M)
\]
is surjective for $k \leq 2(p^{n} + \cdots + 1)$.

Furthermore, let $I^k\langle n \rangle(m)$ be the subgroup of elements in $\BPD^k(m)(M)$ which can be written as a finite sum 
\begin{align*}
u=\sum_{i > n} v_{i,m}(u_i)
\end{align*}
with $u_i \in \BPD^{k+2(p^i-1)}(m)(M)$, $v_i \in BP^{-2(p^i-1)}$, and $v_{i,m}$ the induced map $\BPD^*(m)(M) \to \BPD^{*+2(p^i-1)}(m)(M)$. Then Theorem \ref{mainthmintro} has the the following consequence. 

\begin{theorem}\label{injectiveintro} 
Let $M$ be a complex manifold and $m$ and $n \ge 0$ be integers. The natural induced homomorphism 
\[
\BPD^k(m)(M)/I^k\langle n \rangle(m) \to BP\langle n \rangle_{\Dh}^k(m) (M)
\]
is an isomorphism for $k \leq 2(p^n + \cdots + 1)$ and injective for $k \leq 2(p^n + \cdots + 1) + 2$. 
\end{theorem}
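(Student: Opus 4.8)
The plan is to deduce both statements from the splitting of Theorem~\ref{mainthmintro} together with a filtration argument along the tower $\{BP\langle j\rangle\}_j$. Applying the derived mapping space $\Map(M,-)$ to the weak equivalence \eqref{Hodge5.4intro} and using that $\Map(M,-)$ preserves products, I first obtain on $\pi_0$ a direct product decomposition
\[
\BPD^k(m)(M) \cong BP\langle n\rangle_{\Dh}^k(m)(M) \times \prod_{j>n} BP\langle j\rangle_{\Dh}^{k+2(p^j-1)}(m)(M)
\]
for $k \le 2(p^n+\cdots+1)$, in which the projection onto the first factor is exactly the natural map induced by $g_n$. This reproves the surjectivity statement following Theorem~\ref{mainthmintro}. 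It then suffices to identify the kernel $\Ker$ of the natural map with $I^k\langle n\rangle(m)$: granting $\Ker=I^k\langle n\rangle(m)$ in the range $k\le 2(p^n+\cdots+1)$ gives the isomorphism there, while the same kernel computation pushed out to $k\le 2(p^n+\cdots+1)+2$ gives the asserted injectivity.

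The inclusion $I^k\langle n\rangle(m)\subseteq\Ker$ is immediate, since $g_n$ sends $v_i$ to $0$ for $i>n$ and $v_{i,m}$ is induced by the spectrum-level multiplication by $v_i$, which is compatible with $g_n$; hence $g_n$ annihilates every $v_{i,m}(u_i)$. For the reverse inclusion I would argue inductively up the tower. The cofiber$=$fiber sequences of spectra $\Sigma^{2(p^{j}-1)}BP\langle j\rangle \xrightarrow{v_{j}} BP\langle j\rangle \xrightarrow{f_{j}} BP\langle j-1\rangle$ are, by the functoriality of the Hodge filtered construction (the second step described in the introduction), realized as fiber sequences of simplicial presheaves
\[
BP\langle j\rangle_{k+2(p^{j}-1)}(m) \xrightarrow{\,v_{j}\,} BP\langle j\rangle_{k}(m) \xrightarrow{\,f_{j}\,} BP\langle j-1\rangle_{k}(m),
\]
in which the first map realizes multiplication by $v_{j}$.

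Given $u\in\Ker$, I push it forward under $g_{n+1}$. Using $g_n=f_{n+1}\circ g_{n+1}$ together with the exact sequence on $\pi_0\Map(M,-)$ coming from the fiber sequence above, the class $g_{n+1}(u)$ lies in the image of $v_{n+1}$, say $g_{n+1}(u)=v_{n+1}\cdot w$ with $w\in BP\langle n+1\rangle_{\Dh}^{k+2(p^{n+1}-1)}(m)(M)$. I then lift $w$ along the natural surjection $\BPD^{k+2(p^{n+1}-1)}(m)(M)\to BP\langle n+1\rangle_{\Dh}^{k+2(p^{n+1}-1)}(m)(M)$, which is available precisely when $k+2(p^{n+1}-1)\le 2(p^{n+1}+\cdots+1)$, i.e.\ when $k\le 2(p^n+\cdots+1)+2$; this is exactly the extra range producing the ``$+2$''. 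Subtracting $v_{n+1,m}(\tilde w)\in I^k\langle n\rangle(m)$ replaces $u$ by an element of $\Ker(g_{n+1})$, and iterating with $n+1,n+2,\dots$ (the lifting at level $j$ needs $k\le 2(p^{j}+\cdots+1)+2$, which holds throughout) expresses $u$, modulo $I^k\langle n\rangle(m)$, by a class killed by $g_{j}$ for all large $j$. Since $M$ is finite dimensional, $\BPD^{l}(m)(M)$ and $BP\langle j\rangle_{\Dh}^{l}(m)(M)$ vanish for $l$ large, so only finitely many steps contribute and the remaining class is detected by the $g_{j}$, which are injective in degree $k$ for large $j$; hence it is zero and $u\in I^k\langle n\rangle(m)$.

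I expect the main obstacle to be the second paragraph: one must verify that the spectrum-level $v_{j}$-multiplications and the tower maps $f_{j}$ are faithfully realized as maps of Hodge filtered simplicial presheaves sitting in genuine fiber sequences, and that $\Map(M,-)$ converts these into the exact sequences on $\pi_0$ used in the induction. The connectivity bookkeeping required for the product decomposition and for the termination of the tower (vanishing of the high-degree summands for finite dimensional $M$) is the remaining delicate point, but both should follow from the Hodge filtered analogue of the Atiyah--Hirzebruch spectral sequence for connective theories.
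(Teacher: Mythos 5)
Your proposal is correct and follows essentially the same route as the paper's proof: surjectivity from the splitting, then identification of the kernel with $I^k\langle n \rangle(m)$ by climbing the tower of Hodge filtered fiber sequences $BP\langle j \rangle_{k+2(p^j-1)}(m) \to BP\langle j \rangle_k(m) \to BP\langle j-1 \rangle_k(m)$, lifting via the surjectivity statement in the shifted degree (which is exactly where the ``$+2$'' range enters), subtracting elements of $I^k\langle n \rangle(m)$, and terminating by finite-dimensionality of $M$ together with the stabilization $\BPD^k(m)(M) \cong BP\langle j \rangle^k_{\Dh}(m)(M)$ for large $j$. The only differences are cosmetic (you step through every level $j=n+1, n+2,\dots$ while the paper jumps to the first $q$ with $g_{q,m}(u)\neq 0$, and the paper verifies the needed fiber sequences \eqref{1.4Hodge} via the diagram \eqref{1.4diag} rather than deferring them as your flagged ``main obstacle'').
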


For $n=0$, we have $BP\langle 0 \rangle^*=\Z_{(p)}$ and $BP\langle 0 \rangle_{\Dh}^k (m)(M) = \HD^k(M; \Z_{(p)}(m))$, where $\HD^k(M; \Z_{(p)}(m))$ denotes the $k$th analytic Deligne cohomology of $M$ with coefficients in $\Z_{(p)}(m)$. 
Hence Theorem \ref{injectiveintro} has the following special case which is an analog of \cite[Theorem 2.2]{totaro} for Deligne cohomology. 

\begin{cor}\label{quillen-totaro-intro}
Let $M$ be a complex manifold and $m$ an integer. The natural map
\[
\BPD^{k}(m)(M)/I^k\langle 0 \rangle(m) \to \HD^{k}(M; \Z_{(p)}(m))
\]
is an isomorphism for $k \le 2$ and injective for $k\le 4$. In particular, the quotient $\BPD^{k}(m)(M)/I^k\langle 0 \rangle(m)$ vanishes for negative $k$. 
\end{cor}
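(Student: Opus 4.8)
The plan is to derive Corollary~\ref{quillen-totaro-intro} directly from Theorem~\ref{injectiveintro} by specializing to the case $n = 0$. First I would evaluate the numerical bounds appearing in Theorem~\ref{injectiveintro} at $n = 0$: the sum $p^n + \cdots + 1$ collapses to the single term $p^0 = 1$, so that $2(p^n + \cdots + 1) = 2$ and $2(p^n + \cdots + 1) + 2 = 4$. Next I would invoke the identifications recorded in the paragraph preceding the corollary, namely $BP\langle 0 \rangle^* = \Z_{(p)}$ and $BP\langle 0 \rangle_{\Dh}^k(m)(M) = \HD^k(M; \Z_{(p)}(m))$, which rewrite the target of the map in Theorem~\ref{injectiveintro} as analytic Deligne cohomology. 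Here $I^k\langle 0 \rangle(m)$ is the subgroup of elements expressible as $\sum_{i>0} v_{i,m}(u_i)$, i.e.\ the contribution of all the generators $v_i$ with $i \ge 1$.

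With these substitutions in place, Theorem~\ref{injectiveintro} asserts precisely that the natural map
\[
\BPD^k(m)(M)/I^k\langle 0 \rangle(m) \to \HD^k(M; \Z_{(p)}(m))
\]
is an isomorphism for $k \le 2$ and injective for $k \le 4$, which gives the first two claims of the corollary immediately.

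For the final assertion, I would argue as follows. Analytic Deligne cohomology $\HD^k(M; \Z_{(p)}(m))$ is the hypercohomology of a complex concentrated in non-negative degrees, and therefore vanishes whenever $k < 0$. Since the displayed map is an isomorphism for every $k \le 2$, and in particular for every negative $k$, the quotient $\BPD^k(m)(M)/I^k\langle 0 \rangle(m)$ is isomorphic to $\HD^k(M; \Z_{(p)}(m)) = 0$ for $k < 0$, as claimed.

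Since the statement is a straightforward specialization of Theorem~\ref{injectiveintro}, I do not anticipate a substantive obstacle. The only point requiring care is confirming that analytic Deligne cohomology vanishes in negative degrees, which follows from its standard description as the hypercohomology of a non-negatively graded complex.
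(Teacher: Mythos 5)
Your proposal is correct and matches the paper's own route: the corollary is obtained exactly as the $n=0$ specialization of Theorem~\ref{injectiveintro}, using the identifications $BP\langle 0 \rangle^* = \Z_{(p)}$ and $BP\langle 0 \rangle_{\Dh}^k(m)(M) = \HD^k(M; \Z_{(p)}(m))$ and the bounds $2(p^0)=2$ and $2(p^0)+2=4$. The paper leaves the negative-degree vanishing implicit, and your justification (Deligne cohomology is the hypercohomology of a non-negatively graded complex, hence vanishes for $k<0$, so the isomorphism for $k\le 2$ forces the quotient to vanish there) is precisely the intended argument.
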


The fact that $\BPD^{k}(m)(M)/I^k\langle 0 \rangle(m)$ vanishes in negative degrees could also be reformulated as the following analog of Quillen's theorem. 

\begin{theorem}\label{wilson5.7intro}
Let $M$ be a complex manifold and $m$ an integer. Then $\BPD^*(m)(M)$ is generated as a $BP^*$-module by elements of non-negative degree.
\end{theorem}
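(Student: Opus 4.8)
The plan is to deduce the statement formally from Corollary~\ref{quillen-totaro-intro}, which provides the only genuine input, namely the vanishing of $\BPD^k(m)(M)/I^k\langle 0 \rangle(m)$ in negative degrees. Recall that to say $\BPD^*(m)(M)$ is generated as a $BP^*$-module by elements of non-negative degree means precisely that the $BP^*$-submodule $N$ generated by $\bigoplus_{j\ge 0}\BPD^j(m)(M)$ is all of $\BPD^*(m)(M)$. Since $N$ trivially contains every graded piece in non-negative degree, the entire content is to show that each element of strictly negative degree also lies in $N$.

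So I would fix $u \in \BPD^k(m)(M)$ with $k<0$ and invoke Corollary~\ref{quillen-totaro-intro}: the quotient $\BPD^k(m)(M)/I^k\langle 0 \rangle(m)$ vanishes, so $u \in I^k\langle 0 \rangle(m)$. By the definition of $I^k\langle 0 \rangle(m)$ (the case $n=0$ of the subgroup introduced before Theorem~\ref{injectiveintro}), this means one can write, as a finite sum,
\[
u = \sum_{i \ge 1} v_{i,m}(u_i), \qquad u_i \in \BPD^{k+2(p^i-1)}(m)(M),
\]
where $v_i \in BP^{-2(p^i-1)}$ is the $i$th polynomial generator. The crucial numerical point is that each summand $u_i$ sits in degree $k + 2(p^i-1)$, and since $p^i - 1 \ge p-1 \ge 1$ this degree is strictly larger than $k$.

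I would then finish by induction on the positive integer $-k$. If $k\ge 0$ there is nothing to prove. If $k<0$, write $u$ as above; for each $i$ the factor $u_i$ has degree $k+2(p^i-1)>k$. If $k+2(p^i-1)\ge 0$, then $u_i$ is a non-negative degree element and lies in $N$ by definition; if $k+2(p^i-1)<0$, then $-(k+2(p^i-1))<-k$, so $u_i\in N$ by the inductive hypothesis. In either case $v_{i,m}(u_i)\in N$, because $N$ is a $BP^*$-submodule and $v_i\in BP^*$; hence $u\in N$. As the degrees $k+2(p^i-1)$ strictly increase toward $0$, the induction is well-founded and bottoms out at the non-negative generators, proving that $N=\BPD^*(m)(M)$.

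I do not anticipate any serious obstacle here: once Corollary~\ref{quillen-totaro-intro} is available, the theorem is a purely formal consequence of the graded $BP^*$-module structure, the only subtlety being the verification that the degrees appearing in the inductive step genuinely increase so that the recursion terminates at the non-negative part. This is exactly the mechanism by which Wilson derives Quillen's theorem topologically, and the Hodge filtered analog requires no new ingredient beyond the corollary.
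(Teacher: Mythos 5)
Your proposal is correct and takes essentially the same route as the paper: there, Theorem \ref{wilson5.7intro} is presented precisely as a reformulation of the final assertion of Corollary \ref{quillen-totaro-intro}, namely that $\BPD^{k}(m)(M)/I^k\langle 0 \rangle(m)$ vanishes for $k<0$. Your induction on $-k$ merely spells out the well-founded iteration (each $u_i$ sits in degree $k+2(p^i-1)\ge k+2$) that the paper leaves implicit in calling this a reformulation, so no new ingredient is involved.
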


In \cite{totaro}, Totaro showed that Brown-Peterson cohomology and the map in Corollary \ref{quillen-totaro-intro} (for $BP$ and $H$ instead of $\BPD$ and $\HD$) are very useful tools for the study of the cycle maps for smooth projective complex algebraic varieties from  Chow groups to singular cohomology and Deligne cohomology, respectively. In \cite{hfc} and \cite{aj}, Hopkins and the author used Hodge filtered complex cobordism to study cycles in algebraic cobordism. We are optimistic that Theorem \ref{injectiveintro} will play an important role in the study of various related cycle maps in the future. \\


The author would like to thank Mike Hopkins for very helpful conversations and suggestions. 


\section{Hodge filtered function spaces}

We start with a brief recollection of Eilenberg-MacLane spaces, the singular functor and Hodge filtered forms in the setting of simplicial presheaves. Then we will construct Hodge filtered function spaces.

\subsection{Simplicial presheaves}

Let $\Tb$ be the category $\Manc$ of complex manifolds and holomorphic maps. The Grothendieck topology defined by open coverings turns $\Tb$ into an essentially small site with enough points. 
We denote by $\sPre=\sPreTb$ the category of simplicial presheaves on $\Tb$, i.e., contravariant functors from $\Tb$ to the category $\sS$ of simplicial sets. Objects in $\sPre$ will also be called \emph{spaces}. Recall that sending an object $M$ of $\Tb$ to the presheaf of sets it represents defines a fully faithful embedding of $\Tb$ into the category of presheaves of sets on $\Tb$. Since any presheaf of sets defines an object in $\sPre$ of simplicial dimension zero, we can embed $\Tb$ into $\sPre$. On the other hand, every simplicial set $K$ defines a simplicial presheaf by sending every object to $K$. By abuse of notation, we denote this simplicial presheaf by $K$ as well.

We will consider $\sPre$ with the local projective model structure (see e.g. \cite{dugger}, \cite{dhi}). 
We will not discuss the specific properties of this model structure, but just recall that a map $\Fh \to \Gh$ in $\sPre$ is called a (local) weak equivalence if the induced map of stalks $\Fh_x \to \Gh_x$ is a weak equivalence in $\sS$ for every point $x$ in $\Tb$. Furthermore, a map $\Fh \to \Gh$ is called an objectwise fibration if $\Fh(X) \to \Gh(X)$ is a fibration in $\sS$ for every $X\in \Tb$. A map is a local projective fibration if it is an objectwise fibration and satisfies descent for all hypercovers in $\Tb$ (see \cite[Corollary 7.1]{dhi}). 
We denote the corresponding homotopy category of $\sPre$ by $\hosPre$.


A natural way to send a $CW$-complex into $\sPre$ is the singular functor which is defined as follows. 
Let $\DD^n$ be the standard topological $n$-simplex 
\[
\DD^n =\{ (t_0,\ldots, t_n) \in \R^{n+1} | 0 \leq t_j \leq 1, \sum t_j =1 \}.
\]
For topological spaces $Y$ and $Z$, the singular function complex $\Sing(Z,Y)$ is the simplicial set whose $n$-simplices are continuous maps
\[
f \colon Z \times \DD^n \to Y.
\]
We denote the simplicial presheaf  
\[
M \mapsto \Sing(M,Y)=:\Sing Y(M)
\]
on $\Manc$ by $\Sing Y$. For any $CW$-complex $Y$, the simplicial presheaf $\Sing Y$ is objectwise fibrant and satisfies descent for hypercovers in $\Manc$ by \cite[Theorem 1.3]{di} (see also \cite[Lemma 2.3]{hfc}). By \cite[Corollary 7.1]{dhi}, this implies that $\Sing Y$ is a fibrant object in the local projective model structure on $\sPre$.

Furthermore, for a simplicial set $K$, let $|K|$ be its geometric realization in the category of $CW$-complexes. By \cite[Proposition 2.4]{hfc}, the natural map
\[
K \to \Sing |K|
\]
is a weak equivalence of simplicial presheaves. Hence we can use the assignment $K \mapsto \Sing |K|$ as a natural fibrant replacement in $\sPre$ for simplicial presheaves coming from simplicial sets.



An important class of simplicial presheaves are Eilenberg-MacLane spaces. 
Let $\Ch^*$ be a cochain complex of presheaves of abelian groups on $\Tb$. 
For any integer $n$, we denote by $\Ch^*[n]$ the cochain complex given in degree $q$ by $\Ch^q[n]:=\Ch^{q+n}$. The differential on $\Ch^*[n]$ is the one of $\Ch^*$ multiplied by $(-1)^n$. 
The hypercohomology $H^*(M; \Ch^*)$ of an object $M$ of $\Tb$ with coefficients in $\Ch^*$ is the graded group of morphisms $\Hom(\Z_M, a\Ch^*)$ in the derived category of cochain complexes of sheaves on $\Tb$, where $a\Ch^*$ denotes the complex of associated sheaves of $\Ch^*$. We will denote by $K(\Ch^*, n)$ the Eilenberg-MacLane spaces, i.e., simplicial presheaf, associated to $\Ch^*[-n]$. 
The following result is a version of Verdier's hypercovering theorem due to Ken Brown. 

\begin{prop}\label{verdier} {\rm (\cite[Theorem 2]{brown})} 
Let $\Ch^*$ be a cochain complex of presheaves of abelian groups on $\Tb$. Then for any integer $n$ and any object $M$ of $\Tb$, one has a canonical isomorphism 
\[
H^n(M; \Ch^*)\cong \Hom_{\hosPre}(M, K(\Ch^*, n)).
\]
\end{prop}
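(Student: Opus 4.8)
The plan is to reduce the statement to Ken Brown's generalized Verdier hypercovering theorem via the Dold--Kan correspondence, which is the reason the result is attributed to \cite{brown}.

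First I would make the Eilenberg--MacLane presheaf explicit. Reindexing $\Ch^*[-n]$ as a chain complex and applying the denormalization functor of the Dold--Kan correspondence to its good truncation, one sees that $K(\Ch^*,n)$ is, objectwise, a presheaf of simplicial abelian groups whose homotopy realizes the cohomology of $\Ch^*$ in the appropriate shifted degree. In particular $K(\Ch^*,n)$ is objectwise a Kan complex, but it need not satisfy descent; I would therefore fix a local projective fibrant replacement $K(\Ch^*,n) \to \hat{K}$. Since $M$ is representable it is a cofibrant object of $\sPre$, and by Yoneda $\Map(M,\hat{K}) = \hat{K}(M)$, so that
\[
\Hom_{\hosPre}(M, K(\Ch^*,n)) \cong \pi_0\, \hat{K}(M).
\]

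The key input is Brown's theorem, which computes $\pi_0\,\hat{K}(M)$ in terms of the original objectwise-fibrant presheaf evaluated on hypercovers, thereby circumventing the fibrant replacement. Concretely, it expresses the homotopy classes as a filtered colimit
\[
\Hom_{\hosPre}(M, K(\Ch^*,n)) \cong \colim_{U_\bullet \to M} \pi_0 \holim_{\Delta} K(\Ch^*,n)(U_\bullet)
\]
over the category of hypercovers $U_\bullet \to M$. Applying Dold--Kan a second time, the presheaf $K(\Ch^*,n)$ is a presheaf of simplicial abelian groups, so the totalization of the cosimplicial object $q \mapsto K(\Ch^*,n)(U_q)$ is computed by a total complex, and its relevant homotopy group is the degree-$n$ cohomology of the total complex of $q \mapsto \Ch^*(U_q)$. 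This identifies each term of the colimit with the hypercover (\v Cech-type) cohomology $H^n(\mathrm{Tot}\,\Ch^*(U_\bullet))$.

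Finally I would identify the colimit. The filtered colimit over all hypercovers of these total cohomology groups is, by the very construction of hypercohomology as a derived functor, the hyperext group $\Hom(\Z_M, a\Ch^*[n])$ in the derived category of cochain complexes of sheaves, that is, $H^n(M;\Ch^*)$; naturality in $M$ and in $\Ch^*$ then shows the isomorphism is canonical. The main obstacle is precisely this last identification: one must know that the presheaf-level colimit over hypercovers computes the derived functor of the associated complex of sheaves $a\Ch^*$, i.e. that hypercover cohomology on $\Tb$ agrees with genuine sheaf hypercohomology. This sheafification comparison is the technical heart of Brown's argument and rests on the Brown--Gersten descent properties that are encoded in the local projective model structure on $\sPre$.
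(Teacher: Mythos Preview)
The paper does not give its own proof of this proposition: it is stated as a known result and attributed directly to \cite[Theorem 2]{brown}. Your proposal is therefore not competing with an argument in the paper but rather sketching the argument behind the citation, and as such it is essentially correct: Dold--Kan to interpret $K(\Ch^*,n)$, cofibrancy of the representable $M$, the Verdier-type hypercovering formula for maps in $\hosPre$, and the identification of the colimit over hypercovers with sheaf hypercohomology.

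One small correction of terminology: the last step does not rest on ``Brown--Gersten descent properties.'' Brown--Gersten refers to a specific excision/descent mechanism for the Zariski (or Nisnevich) topology via distinguished squares, which is not what is at play here. Brown's 1973 argument is more primitive and more general: it works in his category-of-fibrant-objects framework for simplicial (pre)sheaves on any site with enough points, and the identification of the hypercover colimit with $\Hom(\Z_M, a\Ch^*[n])$ in the derived category is obtained by comparing both sides to \v Cech/hypercover resolutions of injective (or flasque) sheaves. Replacing that phrase with a direct appeal to Brown's comparison of simplicial-sheaf homotopy with derived-category $\mathrm{Ext}$ would make your sketch accurate.
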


\begin{example}
Let $\Omega_M^n$ denote the sheaf of holomorphic $n$-forms on a complex manifold $M$ and let $\Omega^*$ be the complex of presheaves on $\Manc$ that sends a complex manifold $M$ to the complex $\Omega_M^*(M)$. The inclusion of complexes $\C \into \Omega^*$ is a quasi-isomorphism and induces a weak equivalence of simplicial presheaves
\[
K(\C,k) \to K(\Omega^*, k).
\]
This implies that there is a natural isomorphism  
\[
H^k(M; \C)\cong \Hom_{\hosPre}(M, K(\Omega^*, k))
\]
for every $k$ and $M \in \Manc$. 
\end{example}


\subsection{Hodge filtration on forms}

For a complex manifold $M$, let $\Omega_M^*$ again denote the complex of sheaves of holomorphic forms on $M$. Let $\Vh_{*}$ be an evenly graded $\C$-vector space such that each $\Vh_{2i}$ is a finite dimensional complex vector space. For an integer $n$, we denote by $H^n(M; \Vh_*)$ the graded cohomology group 
\[
H^n(M; \Vh_{*}) := \bigoplus_i H^{n+2i}(M; \Vh_{2i}).
\]

For integers $m$ and $n$, let $F^{m+i}H^{n+2i}(M; \Vh_{2i})$ be the $(m+i)$th step in the Hodge filtration of $H^{n+2i}(M; \Vh_{2i})$. We will write 
\[
F^mH^n(M; \Vh_{*}) := \bigoplus_i F^{m+i}H^{n+2i}(M; \Vh_{2i})
\]
for the graded Hodge filtered cohomology groups of $M$.

We denote by $\Omega_M^{*\ge m}$ the complex of sheaves of holomorphic forms of degree at least $m$. 
Let 
\[
\Omega^*_M \to A^*_M~\text{and}~\Omega^{*\ge m}_M \to F^mA^*_M
\]
be resolutions by cohomologically trivial sheaves which are functorial in $M$. We can choose these resolutions in such a way as to fit into a commutative diagram 
\[
\xymatrix{
\Omega_M^* \ar[d] \ar[r] & \cdots  \ar[r] & \Omega^{*\ge m-1}_M \ar[d]  \ar[r] & \Omega^{*\ge m}_M \ar[d] \ar[r] & \cdots \\
A^*_M \ar[r] & \cdots \ar[r] & F^{m-1}A_M^* \ar[r] & F^mA_M^* \ar[r] & \cdots
}
\]
For example, we could use the Godemont resolution (\cite[\S 3.2.3]{hodge2}). 
Let $A^*$ and $F^mA^*$ be the associated presheaves of complexes on $\Manc$ defined by 
\[
A^* \colon M \mapsto A^*_M(M)~\text{and}~F^mA^* \colon M \mapsto F^mA^*_M(M).
\]
Note that even though $A^*$ and $F^mA^*$ are double complexes, we will only consider their total complexes.

We denote by $A^*(\Vh_{2i})[-2i]$ the presheaf of forms with coefficients in $\Vh_{2i}$ shifted by degree $2i$ and we will write 
\[
A^*(\Vh_*):=\bigoplus_i A^*(\Vh_{2i})[-2i].
\]
For an integer $m$, we define the complex of presheaves $F^mA^*(\Vh_*)$ to be 
\bq\label{defofFpA*graded}
F^mA^*(\Vh_*)):=\bigoplus_i F^{m+i}A^*(\Vh_{2i})[-2i].
\eq
For an integer $n$, let $K(A^*(\Vh_*), n)$ and $K(F^mA^*(\Vh_*), n)$ denote the associated Eilenberg-MacLane spaces. 
Note that \eqref{defofFpA*graded} induces isomorphisms 
\[
K(A^*(\Vh_{*}), n) \cong \bigvee_i K(A^*(\Vh_{2i}), n+2i)
\]
\[
K(F^mA^*(\Vh_{*}), n) \cong \bigvee_i K(F^{m+i}A^*(\Vh_{2i}), n+2i).
\]

Recall that $|\cdot|$ denotes the geometric realization functor which sends simplicial sets to $CW$-complexes. 
The simplicial presheaf $\Sing |K(\Vh_{*}, n)|$ represents the functor of cocycles with coefficients in $\Vh_*$, i.e., for every $M \in \Manc$, there is a natural isomorphism of abelian groups
\[
Z^n(M; \Vh_{*}) \cong \Hom_{\sPre}(M, \Sing |K(\Vh_{*}, n)|), 
\]
where we write $Z^n(M; \Vh_*):= \oplus_i Z^{n+2i}(M; \Vh_{2i})$ for the graded group of cocycles on $M$. 
Since $M$ is a representable presheaf, we have a natural bijection of sets 
\[
\Hom_{\sPre}(M, \Sing |K(\Vh_{*}, n)|) \cong \mathrm{Sing}_0 |K(\Vh_{*}, n)|(M).
\]
Moreover, since $M$ is a cofibrant object in the local projective model structure on $\sPre$, there are natural bijections 
\bq\label{pi0EMcocycles}
\Hom_{\hosPre}(M, \Sing |K(\Vh_{*}, n)|) \cong \pi_0(\Sing |K(\Vh_{*}, n)|(M)) \cong H^n(M; \Vh_*).
\eq

Since the canonical inclusion $\Vh_* \into A_M^*(\Vh_*)$ is a quasi-isomorphism of complexes of sheaves for every $M$, it induces a weak equivalence of simplicial presheaves $K(\Vh_{*}, n) \to K(A^*(\Vh_{*}), n)$. 
Together with \eqref{pi0EMcocycles}, this implies that there are natural bijections
 \[
 \begin{array}{rcl}
\Hom_{\hosPre}(M, \Sing |K(A^*(\Vh_{*}), n)|) & \cong & \pi_0(\Sing |K(A^*(\Vh_{*}), n)|(M))\\
 & \cong & H^n(A^*(M; \Vh_*)) \\
 & \cong & H^n(M; \Vh_*).
\end{array}
\]

If $M$ is a compact K\"ahler manifold, we even have natural bijections 
\[
\begin{array}{rcl}
\Hom_{\hosPre}(M, \Sing |K(F^mA^*(\Vh_{*}), n)|) & \cong & \pi_0(\Sing |K(F^mA^*(\Vh_{*}), n)|(M))\\
 & \cong & H^n(F^mA^*(M; \Vh_*)) \\
 & \cong & F^mH^n(M; \Vh_*).
\end{array}
\]


\subsection{Hodge filtered function spaces}\label{HFS}

 We will now define Hodge filtered function spaces. The idea is similar to the way that differential function spaces were defined for smooth manifolds in \cite{hs}.

Let $m$ and $n$ be integers and $\Vh_{*}$ an evenly graded complex vector space. 
Let $Y$ be a CW-complex and let $\iota \in Z^n(Y; \Vh_{*})$ by a cocycle on $Y$.
A cocycle corresponds to a map of CW-complexes 
\[
Y \to |K(\Vh_{*}, n)|
\]
and induces a map of simplicial presheaves on $\Manc$
\[
\Sing Y \to \Sing |K(\Vh_{*}, n)|.
\]

The canonical inclusion $\Vh_* \into A^*(\Vh_*)$ induces a map $K(\Vh_{*}, n) \to K(A^*(\Vh_{*}), n)$, and we can form the following diagram of simplicial presheaves 
\bq\label{firstdef}
\xymatrix{
 & \Sing Y \ar[d]^{\iota^*}\\
\Sing |K(F^mA^*(\Vh_{*}), n)| \ar[r] & \Sing |K(A^*(\Vh_{*}), n)|.}
\eq

\begin{defn}
We define the \emph{Hodge filtered function space $(Y(m), \iota, n)$} to be the homotopy pullback of \eqref{firstdef} in $\sPre$. 
\end{defn}

\begin{remark}
Note that $(Y(m), \iota, n)$ depends on $\iota$ only up to homotopy, i.e., if $\iota'$ is another cocycle which represents the same cohomology class as $\iota$, 
then $(Y(m), \iota, n)$ and $(Y(m), \iota', n)$ are equivalent.  
\end{remark}


\begin{remark}\label{remarkDeligne1}
For $Y=K(\Z, n)$ and $\Vh_*=\C$ concentrated in degree $0$, we recover analytic Deligne cohomology for complex manifolds in the following way. 
Let $\iota \colon K(\Z, n) \to K(\C, n)$ be the map that is induced by the $(2\pi i)^m$-multiple of the inclusion $\Z \subset \C$ ($i$ being a square root of $-1$). Then 
\[
K(\Z, n)(m) := (K(\Z, n)(m), \iota, n)
\] 
represents analytic Deligne cohomology in $\sPre$, i.e., for every $M \in \Manc$, there is a natural isomorphism 
\[
H^n_{\Dh}(M; \Z(m)) \cong \Hom_{\hosPre}(M, K(\Z, n)(m)).
\]
\end{remark}

\subsection{An alternative definition}

An equivalent way to construct $(Y(m), \iota, n)$ is the following.  
For a complex manifold $M$, let $Z^n(M\times \Delta^{\bullet}; \Vh_{*})$ be the simplicial abelian group whose group of $k$-simplices is given by $C^{\infty}$-$n$-cocycles on $M \times \Delta^k$ with coefficients in $\Vh_{*}$. We denote the corresponding simplicial presheaf 
\[
M\mapsto Z^n(M \times \Delta^{\bullet}; \Vh_{*})
\]
on $\Manc$ by $Z^n(- \times \Delta^{\bullet}; \Vh_{*})$. 
A cocycle $\iota$ determines a map of simplicial presheaves
\[
\Sing Y \to Z^n(- \times \Delta^{\bullet}; \Vh_{*}), ~ f \mapsto \iota^*f, 
\]
given by taking the pullback along $\iota$. 
Let $I$ denote the map given by integration of forms
\[
I \colon F^{m+i}A^{n+2i}(M; \Vh_{2i}) \to C^{n+2i}(M; \Vh_{2i}), ~ \eta \mapsto (\sigma \mapsto \int_{\Delta^{n+2i}}\sigma^*\eta).
\]
We can form a diagram of simplicial presheaves 
\bq\label{seconddef}
\xymatrix{
 & \Sing Y \ar[d]^-{\iota^*}\\
\Sing |K(F^mA^*(\Vh_{*}), n)| \ar[r]_-I & Z^n(- \times \Delta^{\bullet}; \Vh_{*}).}
\eq
The map $\Sing K(\Vh_{*}, n)(M) \to Z^n(M \times \Delta^{\bullet}; \Vh_{*})$ given by pulling back a fundamental cocycle in $Z^n(K(\Vh_{*}, n); \Vh_{*})$ is a simplicial homotopy equivalence (see e.g.\,\cite[Proposition A.12]{hs}). Hence the homotopy pullback of \eqref{firstdef} is homotopy equivalent to the homotopy pullback of \eqref{seconddef}.

For given $m$, $n$ and $\iota$, let us write $Y(m)$ for the homotopy pullback of \eqref{seconddef}. To make the construction more concrete, we describe the $0$-simplices of $Y(m)(M)$ for a complex manifold $M$.  
Since one can calculate homotopy pullbacks in $\sPre$ objectwise (see e.g. \cite[Proposition 2.7]{aj}), we can assume that $Y(m)(M)$ is the homotopy pullback of the diagram of simplicial sets
\[
\xymatrix{
 & \Sing Y(M) \ar[d]^-{\iota^*}\\
\Sing |K(F^mA^*(\Vh_{*}), n)|(M) \ar[r]_-I & Z^n(M\times \Delta^{\bullet}; \Vh_{*}).}
\]

A $0$-simplex of $Y(m)(M)$ is given by a triple 
\[
q \colon M \to Y, ~ \eta \in F^mA^n(M; \Vh_{*})_{\cl}, ~ h \in C^{n-1}(M; \Vh_{*})
\]
where $q$ is a continuous map and $\eta$ is a closed form such that $\delta h = \iota^*q - I(\eta)$, where $\delta$ denotes the differential in $C^{*}(M; \Vh_{*})$. 


\section{Hodge filtered function spaces and spectra}

Our main case of interest is the construction of spaces in the Hodge filtered spectra defined in \cite[\S 4]{hfc}. We will first define such Hodge filtered function spaces and then explain how maps between loop spaces induce maps between Hodge filtered function spaces.

\subsection{Spaces in Hodge filtered $\Omega$-spectra}

Let $m$ be an integer. 
Let $E$ be a connective $\Omega$-spectrum built by $CW$-complexes. We assume that $E$ is rationally even, i.e., $\pi_*E\otimes \Q$ is concentrated in even degrees, and finitely generated in each degree. 
For example, $E$ could be either $BP$ or $BP\langle n \rangle$.

Let $E_k$ be the $k$th space of $E$. By our assumption on $E$, we have $\pi_{*+k}E_k = \pi_*E$. 
We set $\pi_*E_{\C} = \pi_*E\otimes_{\Z} \C$ and let 
\[
\mu_{E_k} \colon \pi_{*+k}E_k \to \pi_*E_{\C}
\]
be the graded homomorphism defined in degree $2j$ by 
\[
\pi_{2j+k}E_k \to \pi_{2j}E\otimes_{\Z}\C, ~ x \mapsto x\otimes (2\pi i)^{j+m}.
\]
 
The homomorphism $\mu_{E_k}$ corresponds to a cohomology class $c_{E_k} \in H^k(E_k; \pi_*E_{\C})$ under the Hurewicz isomorphism 
\[
H^k(E_k; \pi_*E_{\C}) \cong \Hom(\pi_{*+k}E_k, \pi_*E_{\C}).
\]

Let 
\bq\label{iotaEk}
\iota_{E_k} \colon E_k \to |K(\pi_*E_{\C}, k)|
\eq
be the map which represents a cocycle in $Z^k(E_k; \pi_*E_{\C})$ whose cohomology class is $c_{E_k}$. 
The choice of such a map is unique up to homotopy.

The inclusion $\pi_*E_{\C} \into A^*(\pi_*E_{\C})$ induces a map of simplicial presheaves 
\[
K(\pi_*E_{\C}, k) \to K(A^*(\pi_*E_{\C}), k). 
\]
Composition with \eqref{iotaEk} defines a map in $\sPre$   
\[
\iota_{E_k} \colon E_k \to |K(A^*(\pi_*E_{\C}), k)|
\]
which we also denote by $\iota_{E_k}$.

We call $\iota_{E_k}$ an \emph{$m$-twisted fundamental cocycle of $E_k$}. 
We can form the diagram in $\sPre$
\bq\label{Efirstdef}
\xymatrix{
 & \Sing E_k \ar[d]^-{\iota_{E_k}^*}\\
\Sing |K(F^mA^*(\pi_*E_{\C}), k)| \ar[r] & \Sing |K(A^*(\pi_*E_{\C}), k)|.}
\eq
We write $(E_k(m), \iota_{E_k})$ for the homotopy pullback of \eqref{Efirstdef} in $\sPre$. Note that a different choice $\iota'_{E_k}$ of an $m$-twisted fundamental cocycle of $E$ yields a homotopy equivalent simplicial presheaf $(E_k(m), \iota'_{E_k})$. Therefore, we will usually drop $\iota_{E_k}$ from the notation and write $E_k(m)$ for $(E_k(m), \iota_{E_k})$.

\begin{defn}
We call $E_k(m)$ the \emph{$m$th Hodge filtered function space of $E_k$}. 
\end{defn}

The relationship between the spaces $E_k(m)$ and the spectra $\ED(m)$ defined in \cite[\S 4]{hfc} is summarized in the following theorem. 

\begin{theorem}\label{EDiso}
For each $m$, we can choose the cocycles $\iota_{E_k}$ such that $\{E_k(m)\}_k$ forms an $\Omega$-spectrum in the category of presheaves of spectra which is equivalent to the spectrum $\ED(m)$ of \cite[\S 4]{hfc}. For each $k$, the simplicial presheaf $E_k(m)$ represents Hodge filtered $E$-cohomology groups of degree $k$ and twist $m$ in $\hosPre$, i.e., for any $M \in \Manc$, there is a natural isomorphism  
\[
\Hom_{\hosPre}(M, E_k(m)) \cong \ED^k(m)(M).
\]
\end{theorem}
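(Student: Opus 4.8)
The plan is to verify the two assertions separately, first establishing that the spaces $E_k(m)$ assemble into an $\Omega$-spectrum equivalent to $\ED(m)$, and then deducing the representability statement as a formal consequence. I would begin by recalling from \cite[\S 4]{hfc} the explicit construction of $\ED(m)$: it is built as a homotopy pullback in the category of presheaves of spectra involving $\Sing E$, a filtered piece of the de Rham presheaf, and the full de Rham presheaf, all three assembled levelwise from the spaces $E_k$ together with the fundamental cocycles. The key observation is that the homotopy-pullback square \eqref{Efirstdef} defining $E_k(m)$ is precisely the $k$th level of the spectrum-level pullback square defining $\ED(m)$, provided the cocycles $\iota_{E_k}$ are chosen compatibly with the structure maps of the spectrum $E$.

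The first main step is therefore to choose the cocycles coherently. Because $E$ is a connective $\Omega$-spectrum and the class $c_{E_k}$ is natural in $k$ (the homomorphisms $\mu_{E_k}$ are compatible with the suspension isomorphisms $\pi_{*+k}E_k \cong \pi_{*+k+1}E_{k+1}$ up to the fixed twist $(2\pi i)^{j+m}$), the classes $c_{E_k}$ are compatible under the structure maps $E_k \to \Omega E_{k+1}$. One can then inductively select actual cocycle representatives $\iota_{E_k}$ so that the induced maps $\Sing E_k \to \Sing|K(A^*(\pi_*E_{\C}),k)|$ commute with the adjoint structure maps on the nose, not merely up to homotopy. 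This coherence is what upgrades the individual pullback squares into a square of presheaves of spectra whose $k$th level is \eqref{Efirstdef}; taking the homotopy pullback levelwise and using that homotopy pullbacks in $\sPre$ commute with the formation of loop spaces and with $\Sing(-)$ of a levelwise $\Omega$-spectrum then yields weak equivalences $\sigma_k\colon E_k(m) \to \Omega E_{k+1}(m)$. I expect this coherent-choice step to be the main obstacle: while existence follows from obstruction theory (the obstruction groups vanish because the classes already match on cohomology and $\Sing E_k$ is cofibrant), verifying that the resulting spectrum is genuinely equivalent to the one in \cite{hfc} requires matching the two pullback constructions, and one must check that the filtered de Rham presheaves $F^mA^*(\pi_*E_{\C})$ used here agree with those in the reference.

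The second main step identifies the resulting spectrum with $\ED(m)$. Since both are defined as homotopy pullbacks of presheaves of spectra built from the same three corners, $\Sing E$, $K(F^mA^*(\pi_*E_{\C}),\bullet)$, and $K(A^*(\pi_*E_{\C}),\bullet)$, and since a levelwise weak equivalence of the defining diagrams induces a weak equivalence of homotopy pullbacks, it suffices to exhibit a levelwise equivalence of the two pullback diagrams, which is provided by the compatible cocycle choices of the first step. Finally, the representability statement is formal: because homotopy pullbacks in $\sPre$ are computed objectwise and $M$ is cofibrant, applying $\Hom_{\hosPre}(M,-)$ to the defining square \eqref{Efirstdef} yields a Mayer--Vietoris sequence whose terms, by \eqref{pi0EMcocycles} and the de Rham computations in Section 2, compute exactly the long exact sequence defining $\ED^k(m)(M)$ in \cite{hfc}; comparing the two sequences via the five lemma gives the desired natural isomorphism $\Hom_{\hosPre}(M,E_k(m)) \cong \ED^k(m)(M)$.
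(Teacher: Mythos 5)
Your proposal is correct in outline and follows the same skeleton as the paper's proof: choose the cocycles $\iota_{E_k}$ compatibly with the structure maps of $E$, deduce weak equivalences $E_k(m)\to\Omega E_{k+1}(m)$ from the fact that $E$ is an $\Omega$-spectrum together with the compatibility of homotopy pullbacks with $\Omega$ and $\Sing$, and then obtain both the comparison with $\ED(m)$ and representability from a five-lemma argument on long exact sequences. The differences are in execution, and they matter at exactly the point you identified as the main obstacle. For the coherence step the paper does not argue level-by-level: it observes that the twisted homomorphism $\mu_E$ defines a single class $c_E\in H^0(E;\pi_*E_{\C})\cong\Hom(\pi_*E,\pi_*E_{\C})$, represents $c_E$ by a map of spectra $\iota_E\colon E\to H(\pi_*E_{\C})$, and takes the levels of this map as the $\iota_{E_k}$; strict compatibility with the structure maps is then automatic. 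Your inductive strictification can be made rigorous (extend a chosen representative of $c_{E_{k+1}}$ over the adjoint structure map $\Sigma E_k\to E_{k+1}$, a cofibration of CW-complexes, using the homotopy extension property), but note that this must be done for the topological maps $E_k\to|K(\pi_*E_{\C},k)|$ before applying $\Sing$ --- cofibrancy of the simplicial presheaf $\Sing E_k$ plays no role --- and your appeal to ``obstruction theory'' glosses over precisely the bookkeeping that the paper's single spectrum-level choice is designed to eliminate. For the identification with $\ED(m)$, the two defining pullback diagrams do not have literally the same three corners: the construction in \cite[\S 4.1]{hfc} uses the complex $\Omega^{*\ge m}$ of holomorphic forms, whereas $E_k(m)$ uses its acyclic resolution $F^mA^*(\pi_*E_{\C})$. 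You flagged this, and it is exactly why the paper does not match the diagrams levelwise but instead produces the canonical map \eqref{Emap} of presheaves of spectra and proves it is an equivalence via the morphism of long exact sequences \eqref{longexactsequence} and the five lemma for every $M$, upgrading the resulting objectwise isomorphisms to an objectwise, hence stalkwise, equivalence using that both sides are $\Omega$-spectra. Your final Mayer--Vietoris/five-lemma step for representability coincides with the paper's argument.
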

\begin{proof}
We need to show that we can choose cocycles $\iota_{E_k}$ such that they are compatible with the structure maps of $E$. This will show that the family of simplicial presheaves $\{E_k(m)\}_k$ forms a presheaf of spectra. 

Let 
\[
\mu_E \colon \pi_*E \to \pi_*E_{\C}
\]
be the graded homomorphism defined by multiplication by $(2\pi i)^{j+m}$ in degree $2j$. 
The homomorphism $\mu_E$ corresponds to a cohomology class $c_E \in H^0(E; \pi_*E_{\C})$ under the Hurewicz isomorphism
\[
H^0(E; \pi_*E_{\C}) \cong \Hom(\pi_*E, \pi_*E_{\C}).
\]

The class $c_E$ in $H^0(E; \pi_*E_{\C})$ can be represented by a map of spectra 
\[
\iota_E \colon E \to H(\pi_*E_{\C})
\]
where $H(\pi_*E_{\C})$ denotes the Eilenberg-MacLane spectrum associated to the graded $\C$-vector space $\pi_*E_{\C}$. 
This map consists of a family of maps  
\[
\iota_{E_k} \colon E_k \to |K(\pi_*E_{\C}, k)|
\]
which are compatible with the structure maps of the spectra $E$ and $H(\pi_*E_{\C})$. 
More precisely, if $\sigma_k \colon E_k \to \Omega E_{k+1}$ denotes the $k$th structure map of $E$, these cocycles induce commutative diagrams of the form
\[
\xymatrix{
E_k \ar[r]^-{\iota_{E_k}} \ar[d]_-{\sigma_k} & |K(\pi_*E_{\C}, k)| \ar[d] \\
\Omega E_{k+1} \ar[r]^-{\Omega\iota_{E_{k+1}}} & \Omega |K(\pi_*E_{\C}, k+1)|}
\]
where the right hand vertical map is the $k$th structure of $H(\pi_*E_{\C})$.

Hence, since $\sigma_k$ and $\iota_{E_k}$ are compatible, we obtain a commutative diagram
\[
\xymatrix{
\Sing E_k \ar[d]_{\sigma_k} \ar[r]^-{\iota_{E_k}} & \Sing |K(A^*(\pi_{*}E_{\C}), k)| \ar[d] & \ar[l] 
\Sing |K(F^mA^*(\pi_*E_{\C}),k)|  \ar[d] \\
\Sing \Omega E_{k+1} \ar[r]_-{\Omega\iota_{E_{k+1}}} & \Sing \Omega|K(A^*(\pi_*E_{\C}), k+1)| & \ar[l] 
\Sing \Omega|K(F^mA^*(\pi_*E_{\C}), k+1)|.}
\]

Since $E$ is an $\Omega$-spectrum, the vertical maps are weak equivalences in $\sPre$. Hence the map from the homotopy pullback of the upper row to the homotopy pullback of the lower row, which is induced by the vertical maps, is a weak equivalence in $\sPre$. Since taking loop spaces commutes with the singular functor and with taking homotopy pullbacks, this shows that the diagram above induces a map of simplicial presheaves 
\[
\sigma_k(m) \colon E_k(m) \to \Omega E_{k+1}(m)
\]
which is a weak equivalence in $\sPre$. 
This proves that the family of simplicial presheaves $E_k(m)$ together with maps $\sigma_k(m)$, indexed by $k$, forms a presheaf of spectra which we denote by $E(m)$. It follows from the construction of $E(m)$ and the presheaf of spectra $\ED(m)$ in \cite[\S 4.1]{hfc} that there is a canonical map of presheaves of spectra
\bq\label{Emap}
E(m) \to \ED(m).
\eq

Now let $M$ be a complex manifold. It follows from the definition of $E_k(m)$ as a homotopy pullback that the group $\Hom_{\hosPre}(M, E_k(m))$ sits in a long exact sequence analog to the one of \cite[Proposition 4.5]{hfc}. Moreover, the map \eqref{Emap} induces a natural morphism of long exact sequences
\bq\label{longexactsequence}
\scalebox{0.8}{
\xymatrix{
\cdots \ar[r] & E_{\C}^{k-1}(M) \ar[r] \ar[d] & \Hom_{\hosPre}(M, E_k(m))  \ar[r] \ar[d] & 
E^k(M)\oplus H^k(M; F^mA^*(\pi_{*}E_{\C})) \ar[r] \ar[d] & \cdots \\
\cdots \ar[r] & E_{\C}^{k-1}(M) \ar[r] & \ED^k(m)(M) \ar[r] & 
E^k(M)\oplus H^k(M; \Omega^{*\ge m}(\pi_{*}E_{\C})) \ar[r] & \cdots}}
\eq

Since the outer vertical maps are isomorphisms, the induced map 
\[
\Hom_{\hosPre}(M, E_k(m)) \xrightarrow{\cong} \ED^k(m)(M)
\]
is an isomorphism as well. Since we can assume that $\ED(m)$ is an $\Omega$-spectrum, this also implies that the map \eqref{Emap} is an objectwise and hence also stalkwise weak equivalence of spectra. 
\end{proof}

\begin{remark}\label{compactcase}
In order to indicate how the groups $\ED^k(m)(M)$ look like, let us assume $k=2m$ and that $M$ is a compact K\"ahler manifold. 
In this case, one can show using Hodge theory as in \cite[\S 4.3]{hfc} that the (upper) long exact sequence in \eqref{longexactsequence} can be split into short exact sequences of the form
\[
0 \to E^{2m-1}(M)\otimes_{\Z} \R/\Z \to \ED^{2m}(m)(M) \to \HdgE^{2m}(M) \to 0
\]
where $\HdgE^{2m}(M)$ is the subgroup of $E^{2m}(M)$ that is defined by the cartesian square 
\[
\xymatrix{
\HdgE^{2m}(M) \ar[d] \ar[r] & E^{2m}(M) \ar[d] \\ 
F^mH^{2m}(M; \pi_{*}E_{\C}) \ar[r] & H^{2m}(M; \pi_{*}E_{\C}).}
\]
\end{remark}

\subsection{Induced maps and products}

The construction of Hodge filtered function spaces is functorial in the following way.

\begin{prop}\label{inducedmap}
Let $m$ be an integer. Let $E$ and $F$ be connective rationally even $\Omega$-spectra and $f \colon E_k \to F_n$ be a map from the $k$th space of $E$ to the $n$th space of $F$. 

{\rm a)} Then $f$ induces a map of Hodge filtered function spaces 
\[
f(m) \colon E_k(m) \to F_n(m).
\]

{\rm b)} If $f$ is a weak equivalence, then $f(m)$ is a weak equivalence of simplicial presheaves. 
\end{prop}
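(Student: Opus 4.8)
The plan is to show that the map $f \colon E_k \to F_n$ is compatible, up to homotopy, with the $m$-twisted fundamental cocycles on the two spaces, and to deduce the induced map on homotopy pullbacks from this. Recall that $E_k(m)$ and $F_n(m)$ are defined as homotopy pullbacks of diagrams of the shape \eqref{Efirstdef}. A map between homotopy pullbacks is induced by a map of the defining diagrams, so the first task is to produce a commutative (up to homotopy) diagram connecting the cospan for $E_k$ to the cospan for $F_n$. The three terms of the cospan for $E_k(m)$ are $\Sing E_k$, $\Sing|K(A^*(\pi_*E_\C),k)|$ and $\Sing|K(F^mA^*(\pi_*E_\C),k)|$, and similarly for $F_n(m)$ with $\pi_*F_\C$ and degree $n$.

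First I would analyze the effect of $f$ on coefficients. The map $f$ induces a homomorphism $f^* \colon \pi_{*+n}F_n \to \pi_{*+k}E_k$, equivalently $f_* \colon \pi_{*+k}E_k \to \pi_{*+n}F_n$ depending on variance; tensoring with $\C$ gives a $\C$-linear map $\pi_*E_\C \to \pi_*F_\C$ (or its dual), which in turn induces maps of Eilenberg--MacLane objects $K(A^*(\pi_*E_\C),k) \to K(A^*(\pi_*F_\C),n)$ and, crucially, a map respecting the Hodge filtration $K(F^mA^*(\pi_*E_\C),k) \to K(F^mA^*(\pi_*F_\C),n)$. The key point to verify is that the $m$-twisted fundamental cocycle is natural for $f$: since $\iota_{E_k}$ represents the Hurewicz-dual of the twisting homomorphism $\mu_{E_k}$, and the twisting is given uniformly by multiplication by $(2\pi i)^{j+m}$ in degree $2j$, the square
\bq\label{iotanaturality}
\xymatrix{
E_k \ar[r]^-{\iota_{E_k}} \ar[d]_-{f} & |K(A^*(\pi_*E_\C),k)| \ar[d] \\
F_n \ar[r]_-{\iota_{F_n}} & |K(A^*(\pi_*F_\C),n)|
}
\eq
commutes up to homotopy. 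This follows because both composites represent the same class in $H^k(E_k;\pi_*F_\C)$, namely the image of $c_{E_k}$, as the twisting factors $(2\pi i)^{j+m}$ match on both sides; here I would use that cocycles are determined up to homotopy by their cohomology classes, exactly as in the uniqueness statement accompanying \eqref{iotaEk}. Applying the singular functor $\Sing$ and combining \eqref{iotanaturality} with the analogous filtered square, I obtain a map of cospans, hence a map of homotopy pullbacks $f(m) \colon E_k(m) \to F_n(m)$, proving part (a).

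For part (b), I would use that homotopy pullbacks in $\sPre$ can be computed objectwise, as recalled in the excerpt via \cite[Proposition 2.7]{aj}, and that a weak equivalence between the total spaces of two homotopy-pullback cospans is forced once the three corresponding vertical maps are weak equivalences. If $f$ is a weak equivalence, then necessarily $k=n$ and $f$ induces an isomorphism $\pi_*E \cong \pi_*F$, hence an isomorphism $\pi_*E_\C \cong \pi_*F_\C$; consequently the induced maps on $\Sing|K(A^*(\pi_*E_\C),k)|$ and on $\Sing|K(F^mA^*(\pi_*E_\C),k)|$ are weak equivalences, while $\Sing f \colon \Sing E_k \to \Sing F_n$ is a weak equivalence because $\Sing$ preserves weak equivalences of CW-complexes. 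The three legs of the cospan map being weak equivalences, the induced map on homotopy pullbacks is a weak equivalence by the gluing lemma for homotopy pullbacks.

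The main obstacle is the homotopy-commutativity of \eqref{iotanaturality}, i.e.\ the genuine naturality of the $m$-twisted fundamental cocycle with respect to $f$. The cocycles $\iota_{E_k}$ and $\iota_{F_n}$ are each only well defined up to homotopy, so establishing that $f$ intertwines them requires checking an equality of cohomology classes rather than of cocycles, and then lifting that equality to a homotopy of cocycles. The uniformity of the twist $(2\pi i)^{j+m}$ across all spaces of all spectra is precisely what makes this compatibility hold; without it the squares would differ by the discrepancy between the twists, so this is the step where the specific choice of twisting homomorphism does the real work.
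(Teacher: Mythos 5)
Your strategy coincides with the paper's: compare the images of $c_{E_k}$ and $c_{F_n}$ in $H^n(E_k;\pi_*F_{\C})$ via the Hurewicz isomorphism and the uniformity of the twist $(2\pi i)^{j+m}$, deduce that the square comparing $\iota_{F_n}\circ f$ with $f_K\circ\iota_{E_k}$ commutes up to homotopy, and then pass to homotopy pullbacks; part (b) is likewise handled by observing that all three legs of the induced map of cospans are weak equivalences, exactly as in the paper. However, there is a gap at the decisive step: from a square that commutes only \emph{up to homotopy} you cannot conclude that you ``obtain a map of cospans, hence a map of homotopy pullbacks.'' A map of homotopy pullbacks is induced by a strictly commutative map of diagrams (or by a homotopy-commutative one together with explicitly chosen homotopies, handled coherently); a bare homotopy-commutative square provides neither. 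You correctly flag this as ``the main obstacle'' and say the equality of cohomology classes must be lifted to a homotopy of cocycles, but the argument stops there: even with such a homotopy in hand, you still do not have the strictly commutative diagram that your construction of $f(m)$ requires.

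The paper closes exactly this gap by rectification: since $f_K\circ\iota_{E_k}$ and $\iota_{F_n}\circ f$ represent the same class, there is a cochain $b_{F_n}\in C^{n-1}(F_n;\pi_*F_{\C})$ such that $f_K\circ\iota_{E_k}=(\iota_{F_n}-\delta b_{F_n})\circ f$ on the nose; replacing $\iota_{F_n}$ by the cohomologous cocycle $\iota_{F_n}-\delta b_{F_n}$ changes $(F_n(m),\iota_{F_n})$ only by a canonical weak equivalence, and with this choice the diagram \eqref{mapdiag} commutes strictly, so the map of homotopy pullbacks is defined. Your proof needs this step (or an equivalent device, e.g.\ feeding the chosen homotopy into an explicit model of the homotopy pullback) to be complete. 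Two smaller points: a map $f\colon E_k\to F_n$ of spaces induces a covariant homomorphism $f_*\colon\pi_{*+k}E_k\to\pi_{*+k}F_n$, i.e.\ a shifted homomorphism $\pi_*E\to\pi_{*+(k-n)}F$; there is no contravariant option ``depending on variance.'' And in (b) the claim that a weak equivalence forces $k=n$ is both unnecessary and false in general (e.g.\ when low-dimensional homotopy groups vanish); all you need is that $f_*$ is a shifted isomorphism $\pi_*E\cong\pi_{*+(k-n)}F$, which already makes the induced maps on the Eilenberg-MacLane presheaves, filtered and unfiltered, weak equivalences.
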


\begin{proof}
a) We set $\pi_*E_{\C} := \pi_*E\otimes_{\Z}\C$ and $\pi_*F_{\C} := \pi_*F\otimes_{\Z}\C$. 
We define graded homomorphisms $\mu_{E_k} \colon \pi_{*+k}E_k \to \pi_*E_{\C}$ and $\mu_{F_n} \colon \pi_{*+n}F_n \to \pi_*F_{\C}$ by multiplication by $(2\pi i)^{j+m}$ for $*=2j$. 

The given map $f$ induces graded homomorphisms 
\[
\pi_{*}E_k \to \pi_{*}F_n ~ \text{and} ~ \pi_{*}E \to \pi_{*+(k-n)}F.
\]

On the level of Eilenberg-MacLane spaces, $f$ induces a map 
\[
f_K \colon K(\pi_*E_{\C}, k) \to K(\pi_*F_{\C}, n).
\]


Now let $c_{E_k}$ be the cohomology class in $H^k(E_k; \pi_*E_{\C})$ corresponding to $\mu_{E_k}$ under the Hurewicz isomorphism, and let  
\[ 
\iota_{E_k} \colon E_k \to |K(\pi_*E_{\C}, k)|
\]
be a cocycle in $Z^k(E_k; \pi_*E_{\C})$ whose cohomology class is $c_{E_k}$. 
Similarly, let $c_{F_n}$ be the cohomology class in $H^n(F_n; \pi_*F_{\C})$ corresponding to $\mu_{F_n}$ under the Hurewicz isomorphism, and let  
\[ 
\iota_{F_n} \colon F_n \to |K(\pi_*F_{\C}, n)|
\]
be a cocycle in $Z^n(F_n; \pi_*F_{\C})$ with cohomology class $c_{F_n}$. 
The images of $c_{E_k}$ and $c_{F_n}$ in $H^n(E_k; \pi_*F_{\C})$ under the maps 
\[
\xymatrix{
 & H^n(F_n; \pi_*F_{\C}) \ar[d]^-{f^*}\\
H^k(E_k; \pi_*E_{\C}) \ar[r]_-{f_{K*}} & H^n(E_k; \pi_*F_{\C})}
\] 
induced by $f$ and $f_K$ agree, since they are both equal to the class in $H^n(E_k; \pi_*F_{\C})$ which corresponds to the composed graded homomorphism $\pi_{*+k}E_k \to \pi_{*+(k-n)}F_{\C}$ under  
$H^n(E_k; \pi_*F_{\C}) \cong \Hom(\pi_{*+k}E_k, \pi_{*+(k-n)}F_{\C})$. 
This implies that the diagram
\bq\label{cocyclecommdiag}
\xymatrix{
E_k \ar[r]^-f \ar[d]_-{\iota_{E_k}} & F_n \ar[d]^-{\iota_{F_n}} \\
|K(\pi_*E_{\C}, k)| \ar[r]_-{f_K} & |K(\pi_*F_{\C}, n)|}
\eq
commutes up to homotopy. Hence there is a cochain $b_{F_n} \in C^{n-1}(F_n; \pi_*F_{\C})$ such that, after replacing $\iota_{F_n}$ with $\iota_{F_n}-\delta b_{F_n}$, the diagram commutes, i.e., we have $f_K\circ \iota_{E_k} = (\iota_{F_n}-\delta b_{F_n})\circ f$. 
Since the maps $\iota_{F_n}$ and $\iota_{F_n}-\delta b_{F_n}$ are homotopic, the simplicial presheaves $(F_n(m), \iota_{F_n}-\delta b_{F_n})$ and $(F_n(m), \iota_{F_n})$ are canonically weakly equivalent.

Hence for the construction of $F_n(m)$ we can assume from now on that the cocycle $\iota_{F_n}$ is chosen such that diagram \eqref{cocyclecommdiag} commutes. 
Then the map $f$ induces a commutative diagram of simplicial presheaves
\begin{equation}\label{mapdiag}
\xymatrix{
\Sing E_k \ar[d]_{\Sing f} \ar[r]^-{\iota_{E_k}} & \Sing |K(A^*(\pi_{*}E_{\C}), k)| \ar[d]_{\Sing f_K} & \ar[l] 
\Sing |K(F^mA^*(\pi_*E_{\C}), k)|  \ar[d]_{\Sing f_K} \\
\Sing F_n \ar[r]_-{\iota_{F_n}} & \Sing |K(A^*(\pi_*F_{\C}), n)| & \ar[l] 
\Sing |K(F^mA^*(\pi_*F_{\C}), n)|.}
\end{equation}

If we form the homotopy pullback of the top row, which is $E_k(m)$, and the homotopy pullback of the bottom row, which is $F_n(m)$, we obtain that $f$ induces a map of simplicial presheaves 
\[
f(m) \colon E_k(m) \to F_n(m).
\]

b) If $f\colon E_k \to F_n$ is a weak equivalence, then the vertical maps in diagram \eqref{mapdiag} are all weak equivalences. Hence the induced map of homotopy pullbacks $f(m)$ is also a weak equivalence. 
\end{proof}

\begin{remark}
Let $f \colon E_k \to F_n$ be a map as in Proposition \ref{inducedmap} and let $M$ be a complex manifold $M$. One should note that even though $f(m)$ may depend on the chosen cocycles, the induced map on Hodge filtered cohomology groups 
\[
f(m)_* \colon \Hom_{\hosPre}(M, E_k(m)) \to \Hom_{\hosPre}(M, F_n(m))
\]
only depends on the homotopy type of $f$.
\end{remark}

Now we show that Hodge filtered function spaces behave well under taking products. Let $E$ and $F$ be connective rationally even $\Omega$-spectra, and let $E_k$ and $F_n$ be their $k$th and $n$th spaces, respectively. We can choose, independently, cocycles $\iota_{E_k}$ and $\iota_{F_n}$ which represent the homomorphisms $\mu_E$ and $\mu_F$ as in the beginning of the proof of Proposition \ref{inducedmap}. 
Since we have
\[
\pi_*(E\times F) = \pi_*E \oplus \pi_*F,
\]  
there is a canonical homotopy equivalence
\[
|K(\pi_*(E\times F)_{\C}, k+n)| \xrightarrow{\approx} |K(\pi_*E_{\C}, k)| \times |K(\pi_*F_{\C}, n)|.
\]
Hence we can use $\iota_{E_k}$ and $\iota_{F_n}$ to obtain a cocycle
\[
\iota_{E_k\times F_n} \colon E_k \times F_n \to |K(A^*(\pi_*(E\times F)_{\C}), k+n)|. 
\]

We can then form the diagram in $\sPre$  
\[
\xymatrix{
 & \Sing (E_k \times F_n) \ar[d]^-{\iota_{E_k\times F_n}} \\
\Sing |K(F^mA^*(\pi_*(E\times F)_{\C}), k+n)|  \ar[r] & \Sing |K(A^*(\pi_*(E\times F)_{\C}), k+n)|}
\]
The homotopy pullback of this diagram is the simplicial presheaf $(E_k\times F_n)(m)$.

\begin{lemma}\label{productlemma}
Let $E_k$ and $F_n$ be as above.  
Then, for any integer $m$, there is a canonical equivalence of Hodge filtered function spaces 
\[
(E_k \times F_n)(m) \xrightarrow{\approx} E_k(m) \times F_n(m).
\]
\end{lemma}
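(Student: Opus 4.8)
The plan is to show that the whole homotopy pullback diagram defining $(E_k \times F_n)(m)$ splits, up to canonical equivalence, as the product of the two diagrams defining $E_k(m)$ and $F_n(m)$, and then to use that homotopy pullbacks commute with finite products.

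First I would record that every corner of the defining square splits according to the decomposition $\pi_*(E\times F)_\C = \pi_*E_\C \oplus \pi_*F_\C$. The singular functor preserves products strictly, so $\Sing(E_k \times F_n) = \Sing E_k \times \Sing F_n$. For the Eilenberg-MacLane corners, additivity of Eilenberg-MacLane spaces, $K(\Vh_* \oplus \Wh_*, d) \cong K(\Vh_*, d) \times K(\Wh_*, d)$, together with the fact that geometric realization and the singular functor preserve finite products, gives canonical equivalences
\[
\Sing|K(A^*(\pi_*(E\times F)_\C), k+n)| \approx \Sing|K(A^*(\pi_*E_\C), k)| \times \Sing|K(A^*(\pi_*F_\C), n)|,
\]
and the analogous equivalence with $F^mA^*$ in place of $A^*$, using that the Hodge filtration is additive in the coefficients, $F^m(\Vh_* \oplus \Wh_*) = F^m\Vh_* \oplus F^m\Wh_*$. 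These are precisely the realizations of the canonical equivalence recorded just before the statement.

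Next I would verify that the fundamental cocycle $\iota_{E_k \times F_n}$ is identified, under these equivalences, with the product $\iota_{E_k} \times \iota_{F_n}$. This is immediate from its construction: the class $c_{E\times F}$ corresponds under the Hurewicz isomorphism to $\mu_{E\times F} = \mu_E \oplus \mu_F$, and the Hurewicz isomorphism respects the direct sum decomposition of the coefficient groups. Hence the entire cospan defining $(E_k \times F_n)(m)$ is, via the canonical equivalences above, the product of the cospan defining $E_k(m)$ with the cospan defining $F_n(m)$.

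Finally, since a homotopy pullback is a homotopy limit and a finite product is again a limit, the homotopy pullback of a product of cospans is canonically the product of the homotopy pullbacks. Applying this to the two factors yields the desired equivalence $(E_k \times F_n)(m) \xrightarrow{\approx} E_k(m) \times F_n(m)$, and naturality is inherited from the naturality of each identification used. I expect the only delicate point to be the second step: one must make sure the product cocycle actually splits as a strict product, and not merely factorwise up to homotopy, so that the two squares are genuinely the factors of one product square. This holds because the Hurewicz isomorphism is additive, so no homotopy-coherence subtlety intervenes, and the lemma follows.
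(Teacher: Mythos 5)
Your proof is correct and takes essentially the same approach as the paper, whose one-line proof likewise splits the defining cospan as a product and invokes that the singular functor and homotopy pullbacks commute with products and preserve weak equivalences. The only difference is that your second step is automatic in the paper's setup: the cocycle $\iota_{E_k\times F_n}$ is \emph{defined} there as the one obtained from $\iota_{E_k}$ and $\iota_{F_n}$ under the canonical identification, so the strict splitting you worry about holds by construction rather than by an appeal to additivity of the Hurewicz isomorphism.
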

\begin{proof}
This follows from the fact that the singular functor and taking homotopy pullbacks commute with products and preserve weak equivalences. 
\end{proof}

\begin{prop}\label{productmapprop}
Let $E$, $F$ and $G$ be connective rationally even $\Omega$-spectra, and, for integers $k$, $n$ and $j$, let $f \colon E_k \to F_n \times G_j$ be a continuous map.  
Then, for any integer $m$, there is an induced commutative diagram 
\[
\xymatrix{
E_k(m) \ar[r] \ar[dr] & (F_n\times G_j)(m)  \ar[d]^{\approx} \\
 & F_n(m) \times G_j(m).}
\]
\end{prop}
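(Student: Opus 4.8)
The plan is to assemble the triangle from two results already established: Proposition \ref{inducedmap}, which gives functoriality of the Hodge filtered function space construction for maps between spaces of spectra, and Lemma \ref{productlemma}, which identifies the Hodge filtered function space of a product with the product of the factors. The vertical equivalence in the diagram is exactly the one supplied by Lemma \ref{productlemma}, so the real work is to produce the top horizontal map and then to check that the resulting triangle commutes.

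First I would construct the top arrow $f(m) \colon E_k(m) \to (F_n \times G_j)(m)$. The target is the homotopy pullback built from the product cocycle $\iota_{F_n \times G_j}$ assembled from $\iota_{F_n}$ and $\iota_{G_j}$ as in the paragraph preceding Lemma \ref{productlemma}, using the splitting $\pi_*(F \times G)_\C = \pi_* F_\C \oplus \pi_* G_\C$ of the (evenly graded) coefficients. With this cocycle fixed, $f$ fits into the analog of diagram \eqref{mapdiag} with $F_n \times G_j$ in the role of the target space. The verification that the analog of the cohomology square \eqref{cocyclecommdiag} commutes up to homotopy --- and hence, after correcting $\iota_{F_n \times G_j}$ by a coboundary, on the nose --- is identical to the argument in the proof of Proposition \ref{inducedmap}: both composites correspond, under the Hurewicz isomorphism, to the single composed graded homomorphism $\pi_{*+k} E_k \to \pi_* (F \times G)_\C$ determined by $f$. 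Because $\iota_{F_n \times G_j}$ is a product cocycle, this check reduces componentwise to the two verifications already made for the components $\mathrm{pr}_F \circ f$ and $\mathrm{pr}_G \circ f$. Taking homotopy pullbacks along the columns then yields $f(m)$.

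Next I would identify the diagonal composite $E_k(m) \to (F_n \times G_j)(m) \xrightarrow{\approx} F_n(m) \times G_j(m)$. My claim is that it agrees with the map $((\mathrm{pr}_F \circ f)(m), (\mathrm{pr}_G \circ f)(m))$ assembled from the two component maps produced by Proposition \ref{inducedmap}. This is a naturality statement: the equivalence of Lemma \ref{productlemma} is the canonical comparison map, whose two components are induced by the projections $F_n \times G_j \to F_n$ and $F_n \times G_j \to G_j$, and these projections carry $\iota_{F_n \times G_j}$ back to $\iota_{F_n}$ and $\iota_{G_j}$; composing them with $f$ gives precisely $\mathrm{pr}_F \circ f$ and $\mathrm{pr}_G \circ f$. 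Chasing this through the defining homotopy pullbacks shows that the two maps agree, so the triangle commutes.

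The step I expect to be the main obstacle is the cocycle bookkeeping underlying the previous paragraph. As in Proposition \ref{inducedmap}, each induced map exists only after replacing the relevant target cocycle by a homotopic one so that the defining pullback squares commute strictly, and one must check that the coboundary corrections for $\iota_{F_n \times G_j}$ and for its two projections can be chosen compatibly, so that the triangle genuinely commutes rather than commuting up to an uncontrolled homotopy. Since all of these corrections are governed by the same Hurewicz classes and the product cocycle literally splits as a product, the choices are compatible and the remaining verification is routine diagram chasing; I do not anticipate any genuine homotopy-theoretic difficulty beyond tracking these choices.
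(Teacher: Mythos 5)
Your proposal is correct and follows essentially the same route as the paper's proof: the diagonal map is assembled from the component maps given by Proposition \ref{inducedmap}, the vertical equivalence is that of Lemma \ref{productlemma}, and commutativity comes down to the fact that homotopy pullbacks respect products (your componentwise cocycle bookkeeping is exactly the detail the paper leaves implicit). The paper's proof is simply a terser version of your argument, so no further comparison is needed.
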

\begin{proof}
The map $f$ consists of maps $E_k \to F_n$ and $E_k \to G_j$ which by Proposition \ref{inducedmap} induce maps $E_k(m) \to F_n(m)$ and $E_k(m) \to G_j(m)$. Together they give the map $E_k(m) \to F_n(m) \times G_j(m)$. That the diagram commutes follows from the fact that homotopy pullbacks respect products.  
\end{proof}


\section{Unstable splitting for Hodge filtered $BP$-spaces}



Let $p$ be a fixed prime number and $n$ a non-negative integer. Let $BP$ denote the $\Omega$-spectrum representing $BP$-cohomology at $p$ and let $BP\langle n \rangle$ be the $\Omega$-spectrum representing the $n$th intermediate theory defined in \cite{wilson}. 
We write $BP_k$ and $BP\langle n \rangle_k$ for the $k$th spaces of these spectra. 
For a given integer $m$, let $BP_k(m)$ and $BP\langle n \rangle_k(m)$ be the Hodge filtered function spaces associated to $BP_k$ and $BP\langle n \rangle_k$, respectively. 
Our main result is the following analog of Wilson's theorem \cite[Theorem 5.4]{wilson}.

\begin{theorem}\label{wilson5.4}
Let $m$ and $n$ be integers with $n\ge 0$.

{\rm a)} For every $k \leq 2(p^n + \cdots + 1)$, there is a weak equivalence of simplicial presheaves 
\bq\label{wilsonHodge5.4}
BP_k(m) \xrightarrow{\approx} BP\langle n \rangle _k(m) \times \prod_{j>n} BP\langle j \rangle_{k+2(p^j-1)}(m).
\eq

{\rm b)} For every $k \leq 2(p^{n-1} + \cdots + 1)$, there is a weak equivalence of simplicial presheaves 
\bq\label{wilsonHodge5.5}
BP_k\langle n \rangle(m) \xrightarrow{\approx} BP\langle n-1 \rangle _k(m) \times BP\langle n \rangle_{k+2(p^n-1)}(m).
\eq
\end{theorem}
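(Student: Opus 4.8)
The strategy is to transfer Wilson's topological splitting \eqref{splittingintro} to Hodge filtered function spaces using the functoriality established in the previous section. The essential point is that Wilson's equivalence is realized by an explicit map of spaces built from the structure maps $g_n$ and $f_n$ of the spectra, together with multiplication-by-$v_i$ maps, and each of these is a continuous map between the $k$th spaces of connective rationally even $\Omega$-spectra. Since $BP$ and each $BP\langle j \rangle$ are connective, rationally even, and finitely generated in each degree, Proposition \ref{inducedmap} and Proposition \ref{productmapprop} apply to every constituent map.

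\begin{proof}
We prove part a); part b) is entirely analogous, using Wilson's splitting $BP\langle n \rangle_k \xrightarrow{\approx} BP\langle n-1 \rangle_k \times BP\langle n \rangle_{k+2(p^n-1)}$ of \cite[Theorem 5.5]{wilson} in place of \eqref{splittingintro}.

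By \cite[Theorem 5.4]{wilson}, for $k \leq 2(p^n + \cdots + 1)$ the splitting \eqref{splittingintro} is induced by a specific map of spaces
\[
\Phi \colon BP_k \to BP\langle n \rangle_k \times \prod_{j>n} BP\langle j \rangle_{k+2(p^j-1)}
\]
whose components are the map $g_n \colon BP_k \to BP\langle n \rangle_k$ on the $k$th spaces and, for each $j>n$, the composition of $g_j \colon BP_k \to BP\langle j \rangle_k$ with the multiplication-by-$v_j$ map $BP\langle j \rangle_k \to BP\langle j \rangle_{k+2(p^j-1)}$. Each of these is a continuous map between $k$th spaces of connective rationally even $\Omega$-spectra, and $\Phi$ is a weak homotopy equivalence of $CW$-complexes in the stated range.

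Applying Proposition \ref{inducedmap} a) to each component and Proposition \ref{productmapprop} to assemble them, the map $\Phi$ induces a map of simplicial presheaves
\[
\Phi(m) \colon BP_k(m) \to BP\langle n \rangle_k(m) \times \prod_{j>n} BP\langle j \rangle_{k+2(p^j-1)}(m),
\]
where we have used Lemma \ref{productlemma} to identify the Hodge filtered function space of the product target with the product of the Hodge filtered function spaces. (The product is cofinally finite in each degree, so the equivalence of Lemma \ref{productlemma} extends to the infinite product, since in any fixed simplicial degree only finitely many factors are nontrivial.) Finally, since $\Phi$ is a weak equivalence, Proposition \ref{inducedmap} b) shows that $\Phi(m)$ is a weak equivalence of simplicial presheaves, which is the desired equivalence \eqref{wilsonHodge5.4}.
\end{proof}

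The main obstacle is not the transfer itself but the identification of $\Phi$ with an honest map of \emph{spaces} compatible with the hypotheses of Proposition \ref{inducedmap}: one must verify that Wilson's equivalence is genuinely realized by maps between the relevant $k$th spaces (rather than merely existing at the level of homotopy types), and that the multiplication-by-$v_j$ operations are represented by continuous maps of the required source and target spaces. Once this is secured, the passage to Hodge filtered function spaces is formal, as the functoriality of Section 3 was designed precisely to preserve such maps and equivalences.
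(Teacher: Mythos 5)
Your overall strategy coincides with the paper's: take Wilson's unstable map of spaces, transfer it to Hodge filtered function spaces via Proposition \ref{inducedmap} and Proposition \ref{productmapprop} (with Lemma \ref{productlemma} identifying the target), and conclude that the result is a weak equivalence (the paper phrases this last step as two-out-of-three in the commutative triangle of Proposition \ref{productmapprop}; your ``compose with the equivalence of Lemma \ref{productlemma}'' is the same argument). So the formal transfer is correct and is exactly what the paper does.

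However, your description of Wilson's map $\Phi$ is wrong, and wrong in a way that touches the central subtlety of the paper. You claim the component $BP_k \to BP\langle j \rangle_{k+2(p^j-1)}$ is ``$g_j$ followed by multiplication by $v_j$''. Since $v_j \in BP^{-2(p^j-1)}$, multiplication by $v_j$ \emph{lowers} cohomological degree: on representing spaces it is a map $BP\langle j \rangle_{k+2(p^j-1)} \to BP\langle j \rangle_{k}$ (this is exactly the fiber inclusion in the fiber sequence recalled after Corollary \ref{wilson5.6}), so your composite does not even have the claimed target. Worse, if the components of $\Phi$ were composites of spectrum-level maps such as $g_j$ and $v_j$, the splitting would exist at the level of spectra, which it does not (\cite[p. 817]{bjw}, as recalled in the introduction); the splitting is an intrinsically unstable phenomenon, valid only for $k \leq 2(p^n+\cdots+1)$, and Wilson's map is produced by unstable obstruction-theoretic arguments. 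The paper avoids this issue by simply citing \cite[Corollary 3.6]{wilson} for the existence of the map of spaces and \cite[Corollary 5.4]{wilson} for it being a homotopy equivalence. Your argument survives your misdescription only because Proposition \ref{inducedmap} needs nothing more than a continuous map between spaces of connective rationally even $\Omega$-spectra --- which Wilson's components are, however they were constructed --- so the fix is to delete the false description and quote Wilson for the map itself. A smaller point: your justification for handling the infinite product (``only finitely many factors are nontrivial in each simplicial degree'') is not accurate, since the simplicial presheaves $\Sing BP\langle j \rangle_{k+2(p^j-1)}$ are not eventually trivial in any fixed simplicial degree; what makes the infinite product harmless is that the connectivity of the factors grows with $j$, so only finitely many factors contribute in each homotopy degree. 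The paper itself passes over this point silently, so raising it is to your credit, but the reason you give is the wrong one.
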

\begin{proof}
a) In \cite[Corollary 3.6]{wilson}, Wilson shows that, for $k \leq 2(p^n + \cdots + 1)$, there is a map 
\begin{equation}\label{mainwilson}
BP_k \to BP\langle n \rangle_k \times \prod_{j>n} BP\langle j \rangle_{k+2(p^j-1)}.
\end{equation}

By Proposition \ref{productmapprop}, we obtain the map \eqref{wilsonHodge5.4} as the induced map of Hodge filtered function spaces. By \cite[Corollary 5.4]{wilson}, the map \eqref{mainwilson} is actually a homotopy equivalence. By the two-out-of-three property for weak equivalences, it follows from the commutative diagram of Proposition \ref{productmapprop} that the map  \eqref{wilsonHodge5.4} is a weak equivalence. 

b) The second equivalence is induced in the same way by the equivalence of \cite[Corollary 5.5]{wilson}. 
\end{proof}


Together with Theorem \ref{EDiso}, Theorem \ref{wilson5.4} implies the following result. 

\begin{cor}\label{wilson5.6}
Let $M$ be a complex manifold and $m$ and $n\ge 0$ be integers. Then the natural map 
\[
\BPD^k(m)(M) \to BP\langle n \rangle_{\Dh}^k(m)(M)
\]
is surjective for $k \leq 2(p^{n} + \cdots + 1)$.
\end{cor}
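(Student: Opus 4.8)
The plan is to reduce the statement to the surjectivity, on the functor $\Hom_{\hosPre}(M,-)$, of the projection of a product onto one of its factors. First I would fix the integer $m$ and invoke Theorem \ref{EDiso} to rewrite both sides as morphism sets in the homotopy category: $\BPD^k(m)(M) \cong \Hom_{\hosPre}(M, BP_k(m))$ and $BP\langle n \rangle_{\Dh}^k(m)(M) \cong \Hom_{\hosPre}(M, BP\langle n \rangle_k(m))$. Under these identifications the natural map in the statement is the map $g_n(m)_*$ induced by $g_n(m) \colon BP_k(m) \to BP\langle n \rangle_k(m)$, where $g_n \colon BP_k \to BP\langle n \rangle_k$ is the canonical map and $g_n(m)$ is its image under the functor of Proposition \ref{inducedmap}.

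Next I would observe that $g_n$ is precisely the first component of Wilson's map \eqref{mainwilson}. By the naturality recorded in Proposition \ref{productmapprop}, applied with $f$ taken to be Wilson's map, the map $g_n(m)$ factors as the splitting map $BP_k(m) \to BP\langle n \rangle_k(m) \times \prod_{j>n} BP\langle j \rangle_{k+2(p^j-1)}(m)$ of \eqref{wilsonHodge5.4}, followed by the projection onto the factor $BP\langle n \rangle_k(m)$. For $k \le 2(p^n + \cdots + 1)$, Theorem \ref{wilson5.4}a) guarantees that this splitting map is a weak equivalence, hence induces a bijection on $\Hom_{\hosPre}(M,-)$.

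The conclusion would then be formal: since $\Hom_{\hosPre}(M, A \times B) \cong \Hom_{\hosPre}(M, A) \times \Hom_{\hosPre}(M, B)$, the projection onto a factor is split surjective, and the composite of a bijection with a surjection is again surjective. As this composite is $g_n(m)_*$, translating back through Theorem \ref{EDiso} yields the surjectivity of $\BPD^k(m)(M) \to BP\langle n \rangle_{\Dh}^k(m)(M)$ in the asserted range.

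The step I expect to be the main obstacle is the identification in the first paragraph: one must verify that the natural map on Hodge filtered cohomology genuinely coincides with $g_n(m)_*$ under the isomorphisms of Theorem \ref{EDiso}. This is where the cocycle bookkeeping of Proposition \ref{inducedmap} is essential — the $m$-twisted fundamental cocycles $\iota_{BP_k}$ and $\iota_{BP\langle n \rangle_k}$ must be chosen compatibly with $g_n$, after adjusting one of them by a coboundary $\delta b$, so that the induced map of homotopy pullbacks actually models the map induced by $g_n$ on the Hodge filtered spectra of \cite[\S 4]{hfc}. Once this compatibility is established, everything else is a routine manipulation of morphism sets in $\hosPre$.
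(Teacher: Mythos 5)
Your proposal is correct and is essentially the paper's own argument: the paper states the corollary as an immediate consequence of Theorem \ref{wilson5.4} together with Theorem \ref{EDiso}, which is exactly the reduction you carry out (splitting equivalence composed with projection onto the factor $BP\langle n \rangle_k(m)$, identified with the natural map $g_n(m)_*$ via the representability isomorphisms). The cocycle-compatibility point you flag as the main obstacle is precisely what Proposition \ref{inducedmap} already settles, so your write-up just makes explicit what the paper leaves implicit.
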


As in \cite[Theorem 5.7]{wilson}, we can use Corollary \ref{wilson5.6} to deduce an analog of Quillen's theorem. 
Recall \cite[p.105]{wilson} that, for given integers $k$, $n$ and $j:=2(p^n-1)$, there is a fiber sequence   
\[
BP\langle n \rangle_{k+j} \xrightarrow{v_n} BP\langle n \rangle_{k} \xrightarrow{f_n} BP\langle n-1 \rangle_{k}
\]
where $v_n$ denotes the $n$th generator in $BP^{*}=\Z_{(p)}[v_1, \ldots, v_n, \ldots]$. 
For every integer $m$, this sequence induces a commutative diagram in $\sPre$ 
\begin{equation}\label{1.4diag}
\scalebox{0.75}{
\xymatrix{
\Sing BP\langle n \rangle_{k+j} \ar[d] \ar[r] & \Sing BP\langle n \rangle_{k} \ar[d] \ar[r] & \Sing BP\langle n-1 \rangle_{k} \ar[d] \\
\Sing |K(A^*(\pi_*BP\langle n \rangle_{\C}, k+j))| \ar[r] & \Sing |K(A^*(\pi_*BP\langle n \rangle_{\C}, k))| \ar[r] & \Sing |K(A^*(\pi_*BP\langle n-1 \rangle_{\C}, k))| \\ 
\Sing |K(F^mA^*(\pi_*BP\langle n \rangle_{\C}, k+j))| \ar[u] \ar[r] & \Sing |K(F^mA^*(\pi_*BP\langle n \rangle_{\C}, k))| \ar[u] \ar[r] & \Sing |K(F^mA^*(\pi_*BP\langle n-1 \rangle_{\C}, k))| \ar[u]
}}
\end{equation}
in which each row is a fiber sequence (the functor $\Sing$ is right Quillen adjoint and preserves fiber sequences). Since homotopy pullbacks preserve fiber sequences, diagram \eqref{1.4diag} induces a fiber sequence in $\sPre$
\bq\label{1.4Hodge}
BP\langle n \rangle_{k+j}(m) \xrightarrow{v_n(m)} BP\langle n \rangle_{k}(m) \xrightarrow{f_n(m)} BP\langle n-1 \rangle_{k}(m).
\eq

Furthermore, there are the maps $g_n \colon BP_k \to BP\langle n \rangle_k$ which are compatible with $f_n$ in the sense that $f_n \circ g_n = g_{n-1}$. 
Each $g_n$ induces a map of simplicial presheaves
\[
g_n(m) \colon BP_k(m) \to BP\langle n \rangle_k(m).
\]

For every complex manifold $M$, the above maps induce a commutative diagram
\bq\label{diag118}
\xymatrix{
\BPD^{k+2(p^n-1)}(m)(M) \ar[d]_-{g_{n,m}} \ar[r]^-{v_{n,m}} & \BPD^{k}(m)(M) \ar[d]^-{g_{n,m}} & \\
BP\langle n \rangle_{\Dh}^{k+2(p^n-1)}(m)(M) \ar[r]^-{v_{n,m}} & BP\langle n \rangle_{\Dh}^{k}(m)(M) \ar[r]^-{f_{n,m}} & BP\langle n-1 \rangle_{\Dh}^{k}(m)(M)}
\eq
where the lower row is exact, since \eqref{1.4Hodge} is a fiber sequence.

Let $I^k\langle n \rangle(m)$ be the subgroup of elements in $\BPD^k(m)(M)$ which can be written as a finite sum 
\begin{align}\label{unform}
u=\sum_{i > n} v_{i,m}(u_i)
\end{align}
with $u_i \in \BPD^{k+2(p^i-1)}(m)(M)$ and $v_i \in BP^{-2(p^i-1)}$. 
Since the lower row of diagram \eqref{diag118} is exact, the subgroup $I^k\langle n \rangle(m)$ is contained in the kernel of the natural homomorphism 
\[
\BPD^k(m)(M) \to BP\langle n \rangle_{\Dh}^k(m) (M). 
\]

\begin{theorem}\label{injective} 
Let $M$ be a complex manifold and $m$ and $n \ge 0$ be integers. Then the induced homomorphism 
\[
\BPD^k(m)(M)/I^k\langle n \rangle(m) \to BP\langle n \rangle_{\Dh}^k(m) (M)
\]
is an isomorphism for $k \leq 2(p^n + \cdots + 1)$ and injective for $k \leq 2(p^n + \cdots + 1) + 2$. 
\end{theorem}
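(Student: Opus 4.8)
The plan is to deduce both assertions from the surjectivity already recorded in Corollary \ref{wilson5.6} together with a vanishing of Hodge filtered cohomology in high degrees. Since Corollary \ref{wilson5.6} gives that $g_{n,m}\colon\BPD^k(m)(M)\to BP\langle n \rangle_{\Dh}^k(m)(M)$ is surjective for $k\le 2(p^n+\cdots+1)$, and the reverse inclusion $I^k\langle n\rangle(m)\subseteq\ker g_{n,m}$ is already established from the exactness of the lower row of \eqref{diag118}, the theorem reduces to the single inclusion $\ker g_{n,m}\subseteq I^k\langle n\rangle(m)$ in degree $k$, which I will prove in the stated range $k\le 2(p^n+\cdots+1)+2$. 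Granting this, surjectivity upgrades injectivity to an isomorphism for $k\le 2(p^n+\cdots+1)$.

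The main step is an iteration up the tower of intermediate theories. Let $u\in\ker g_{n,m}$ in degree $k\le 2(p^n+\cdots+1)+2$ and set $u_0=u$. Suppose inductively that $u_r\in\ker g_{N,m}$ for some $N\ge n$. Because $f_{N+1}\circ g_{N+1}=g_N$, the class $g_{N+1,m}(u_r)$ lies in $\ker f_{N+1,m}$, so by the exactness of the fiber sequence \eqref{1.4Hodge} at level $N+1$ it equals $v_{N+1,m}(a_r)$ for some $a_r\in BP\langle N+1 \rangle_{\Dh}^{k+2(p^{N+1}-1)}(m)(M)$. I then lift $a_r$ to an element $\tilde a_r\in\BPD^{k+2(p^{N+1}-1)}(m)(M)$ using Corollary \ref{wilson5.6}; such a lift exists precisely when $k+2(p^{N+1}-1)\le 2(p^{N+1}+\cdots+1)$, that is $k\le 2(p^N+\cdots+1)+2$, and the very first step $N=n$ is exactly what forces the bound $k\le 2(p^n+\cdots+1)+2$ (all later steps impose weaker conditions). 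Replacing $u_r$ by $u_{r+1}:=u_r-v_{N+1}\tilde a_r$ gives $u_{r+1}\in\ker g_{N+1,m}$, while the removed term $v_{N+1}\tilde a_r$ lies in $I^k\langle n\rangle(m)$ since $N+1>n$. Thus $u$ agrees modulo $I^k\langle n\rangle(m)$ with an element killed by $g_{N,m}$ for every $N\ge n$.

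To finish I must show this process terminates, which is where I expect the main obstacle to lie. The key input is a uniform vanishing statement: for every $j$ one has $BP\langle j \rangle_{\Dh}^\ell(m)(M)=0$ once $\ell>2\dim_{\C}M+1$. This follows from the long exact sequence \eqref{longexactsequence} applied to $BP\langle j \rangle$, because the spaces $BP\langle j \rangle_\ell$ are $(\ell-1)$-connected, so that $BP\langle j \rangle^\ell(M)=0$ for $\ell>2\dim_{\C}M$, while the terms $H^\ell(M;F^mA^*(\pi_*BP\langle j \rangle_{\C}))$ and $BP\langle j \rangle_{\C}^{\ell-1}(M)$ vanish in the same range since $\pi_*BP\langle j \rangle$ is concentrated in non-negative degrees and $M$ has cohomological dimension $2\dim_{\C}M$; crucially this bound is independent of $j$. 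As $k+2(p^{N+1}-1)\to\infty$, the groups containing the classes $a_r$ eventually vanish, so the corrections stop and $u$ becomes congruent modulo $I^k\langle n\rangle(m)$ to an element of $\ker g_{N,m}$ for all large $N$. Finally, applying $\Hom_{\hosPre}(M,-)$ to Wilson's splitting \eqref{wilsonHodge5.4} together with Theorem \ref{EDiso} identifies $\ker g_{N,m}$ in degree $k$, whose first component is $g_{N,m}$, with the product $\prod_{j>N}BP\langle j \rangle_{\Dh}^{k+2(p^j-1)}(m)(M)$, and the same uniform vanishing makes this product zero for $N$ large. Hence the residual term vanishes and $u\in I^k\langle n\rangle(m)$, which completes the argument.
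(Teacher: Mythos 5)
Your proposal is correct, and its engine is exactly the paper's: given $u$ in the kernel, you use exactness of the lower row of \eqref{diag118} (i.e.\ of the fiber sequence \eqref{1.4Hodge}) to write the obstruction at the next level as $v_{N+1,m}(a_r)$, lift $a_r$ through Corollary \ref{wilson5.6} using precisely the bound $k+2(p^{N+1}-1)\le 2(p^{N+1}+\cdots+1)$, equivalently $k\le 2(p^{N}+\cdots+1)+2$, and subtract the resulting element of $I^k\langle n\rangle(m)$; the paper does the same (following Wilson), the only cosmetic difference being that you advance one level at a time while the paper jumps directly to the smallest $q>n$ with $g_{q,m}(u)\neq 0$. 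Where you genuinely diverge is the termination. The paper shows that $g_{N,m}\colon \BPD^k(m)(M)\to BP\langle N \rangle_{\Dh}^k(m)(M)$ is an isomorphism for $N$ large, deduced from $BP^k(M)\cong BP\langle N\rangle^k(M)$ (finite complex) and the morphism of long exact sequences \eqref{BPles}, so the stabilized residual must vanish; you instead prove a high-degree vanishing of $BP\langle j \rangle_{\Dh}^{\ell}(m)(M)$ uniform in $j$ and then apply the splitting \eqref{wilsonHodge5.4} a second time to identify $\ker g_{N,m}$ with $\prod_{j>N}BP\langle j \rangle_{\Dh}^{k+2(p^j-1)}(m)(M)$, which that vanishing kills. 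Both finishes are sound and rest on the same inputs (connectivity, finite cohomological dimension of $M$, the defining long exact sequence); your route avoids the explicit stabilization statement but uses two facts the paper also relies on implicitly, namely that the first factor of Wilson's splitting map is $g_N$ and that $\Hom_{\hosPre}(M,-)$ takes the (infinite) product of fibrant Hodge filtered spaces to the product of groups. One small caveat: your bound $\ell>2\dim_{\C}M+1$ for the term $H^{\ell}(M;F^mA^*(\pi_*BP\langle j\rangle_{\C}))$ needs vanishing of coherent sheaf cohomology above $\dim_{\C}M$; a cruder bound (say $3\dim_{\C}M$) is immediate from the hypercohomology spectral sequence, and since only the existence of a bound independent of $j$ matters, this does not affect the argument.
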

\begin{proof}
The surjectivity in dimensions $k \leq 2(p^n + \cdots + 1)$ follows directly from Corollary \ref{wilson5.6}. It remains to check injectivity the proof of which will follow as in \cite[Proof of Theorem 5.7]{wilson}.

Let $k \le 2(p^n + \cdots + 1) + 2$ and $u\in \BPD^k(m)(M)$ be an element which maps to $0$ in $BP\langle n \rangle_{\Dh}^k(m) (M)$. 
Since $M$ has the homotopy type of a finite complex, we know that the natural map is an isomorphism 
$BP^k(M) \cong BP\langle n \rangle^k(M)$ if $n$ is large enough. 
The map $g_n$ induces a morphism of long exact sequences 
\bq\label{BPles}
\scalebox{0.8}{
\xymatrix{
\cdots \ar[r] & BP_{\C}^{k-1}(M) \ar[r] \ar[d] & \BPD^k(m)(M)  \ar[r] \ar[d] & 
BP^k(M)\oplus H^k(M; F^mA^*(\pi_{*}BP_{\C})) \ar[r] \ar[d] & \cdots \\
\cdots \ar[r] & BP\langle n \rangle_{\C}^{k-1}(M) \ar[r] & \BPnD^k(m)(M) \ar[r] & 
BP\langle n \rangle^k(M)\oplus H^k(M; F^mA^*(\pi_{*}BP\langle n \rangle_{\C})) \ar[r] & \cdots}}
\eq

This shows that the associated Hodge filtered theories satisfy
\[
\BPD^k(m)(M) \cong BP\langle n \rangle_{\Dh}^k (m)(M) ~\text{for}~n~ \text{large enough}.
\] 

Hence we can find and fix the integer $q>n$ such that 
\begin{align*}
g_{q,m}(u)\ne 0 ~\text{and}~ f_{q,m}(g_{q,m}(u)) = g_{q-1,m}(u)=0.
\end{align*}
By the exactness of the lower row in diagram \eqref{diag118}, there is then an element 
\begin{align*}
u'\in BP\langle q \rangle_{\Dh}^{k+2(p^q-1)}(m)(M) ~ \text{with}~v_{q,m}(u')=g_{q,m}(u).
\end{align*}
Since $k \le 2(p^n + \cdots + 1) + 2$ and $q \ge n + 1$, we have
\[
k + 2(p^q-1) \le 2(p^n + \cdots + 1) + 2 + 2(p^q-1) \le 2(p^q + \cdots + 1).
\]

Hence, by Corollary \ref{wilson5.6}, there is an element $u_q$ in $\BPD^{k+2(p^q-1)}(m)(M)$ such that $g_{q,m}(u_q)=u'$ and $g_{q,m}(v_{q,m}(u_q))=g_{q,m}(u)$. 
Since $M$ has the homotopy type of a finite complex, it follows again from the upper long exact sequence in \eqref{BPles} that $\BPD^{k+2(p^j-1)}(m)(M)$ will be zero for $j$ large enough. Hence repeating this process with $u$ replaced by $u-v_{q,m}(u_q)$ shows that $u$ can be written as a finite sum of the form \eqref{unform} and lies in $I^k\langle n \rangle(m)$. 
\end{proof}


\begin{remark}
As mentioned in the introduction, we hope that Theorem \ref{injective} will help to find new interesting examples of algebraic cobordism cycles on smooth projective complex algebraic varieties. The idea is to study analogs of the Abel-Jacobi map of \cite{aj} for $BP\langle n \rangle$ for various $n$ in order to find families of cycles which are topologically trivial but non-trivial algebraically. Eventually, this should lead to new insights into the map from algebraic to complex cobordism for smooth complex varieties. 
\end{remark}


%
%
%
%
\bibliographystyle{amsalpha}

\end{document}